\setlist[enumerate]{font={\rm},itemsep=0.2\baselineskip}
\setlist[enumerate,1]{label={(\roman*)}}
\setlist[enumerate,2]{label={(\arabic*)}}
\newtheorem{theorem}{Theorem}[section]
\newtheorem{corollary}[theorem]{Corollary}
\newtheorem{lemma}[theorem]{Lemma}
\theoremstyle{definition}
\newtheorem{definition}[theorem]{Definition}
\newtheorem*{conj*}{Conjecture}
\theoremstyle{remark}
\newtheorem{remark}[theorem]{Remark}
\def\GL{{\mathrm{GL}}}
\def\GammaL{{\Gamma \mathrm{L}}}
\def\SL{{\mathrm{SL}}}
\def\PSL{{\mathrm{PSL}}}
\def\Sp{{\mathrm{Sp}}}
\def\PSp{{\mathrm{PSp}}}
\def\SU{{\mathrm{SU}}}
\def\PSU{{\mathrm{PSU}}}
\def\G{{\mathrm{G}}}
\def\A{{\mathrm{A}}}
\def\F{{\mathrm{F}}}
\def\E{{\mathrm{E}}}
\def\L{{\mathrm{L}}}
\def\K{{\mathrm{K}}}
\def\bbZ{{\mathbb{Z}}}
\def\bbF{{\mathbb{F}}}
\def\Tr{{\mathrm{Tr}}}
\def\bfZ{{\mathbf{Z}}}
\def\Aut{{\mathrm{Aut}}}
\def\End{{\mathrm{End}}}
\def\Gal{{\mathrm{Gal}}}
\def\leqs{\leqslant}
\def\geqs{\geqslant}
\def\le{\leqslant}
\def\l{\langle}
\def\r{\rangle}
\def\Magma{{\sc Magma}}
\def\Sz{{\mathrm{Sz}}}
\begin{document}

\title{A proof of {Gross}' conjecture on $2$-automorphic $2$-groups}
\author{Cai Heng Li}

\address{SUSTech International Center for Mathematics, and Department of Mathematics, Southern University of Science and Technology, Shenzhen, Guangdong, China}
\email{lich@sustech.edu.cn {\text{\rm(Li)}}}

\author{Yan Zhou Zhu}

\address{SUSTech International Center for Mathematics, and Department of Mathematics, Southern University of Science and Technology, Shenzhen, Guangdong, China}
\email{zhuyz@mail.sustech.edu.cn {\text{\rm(Zhu)}}}

\begin{abstract}
	Building upon previous results, a classification is given of finite $p$-groups whose elements of order $p$ are all fused.
	This particularly confirms a conjecture of Gross proposed in 1976 on $2$-automorphic $2$-groups, which are $2$-groups with involutions forming a single fusion class.
	As a consequence, two open problems regarding AT-groups and FIF-groups are solved.
\end{abstract}

\subjclass[2020]{20D15, 20D45, 20C33.}
\keywords{Suzuki $2$-groups; $2$-automorphic $2$-groups; exterior squre; dual module}
\maketitle

\section{Introduction}

It is well-known that finite $p$-groups with a single subgroup of order $p$ are cyclic or a generalized quaternion group~\cite[page 199]{gorenstein1980Finite}.
The class of finite $p$-groups with a single fusion class of elements of order $p$ has been extensively studied.
(Recall that a {\it fusion class} of a group $N$ is an orbit of $\Aut(N)$ on the elements.)
The study was initiated by G.\,Higman~\cite{higman1963Suzuki} in 1963 when he classified finite non-abelian $2$-groups (with more than one involution) that have cyclic groups of automorphisms transitive on the involutions, called {\it Suzuki $2$-groups}.

For odd primes $p$, Shult~\cite{shult1969finite} showed that homocyclic $p$-groups are the only $p$-groups of which elements of order $p$ form a single fusion class.
Shaw~\cite{shaw1970Sylow} showed that a finite non-abelian $2$-group (with more than one involution) which has a solvable group of automorphisms transitive on the involutions is a Suzuki $2$-group.
Higman's classification associated with Shaw's work forms Sections~5-7 of Chapter VII of the book written by Huppert and Blackburn~\cite{huppert1982Finite}.

In 1976, Gross~\cite{gross1976automorphic} studied finite $2$-groups with a single fusion class of involutions, called \textit{$2$-automorphic $2$-groups}.
He further proposed a conjecture that (roughly speaking) $2$-automorphic $2$-groups are Suzuki $2$-groups.
This conjecture has been partially solved by Bryukhanova~\cite{bryukhanova1981Automorphism} and Wilkens~\cite{wilkens1996note}, and is recorded as Problem 7.31 in the Kourovka notebook~\cite{mazurov1995Unsolved}.

Building upon previous results, the main theorem of this paper obtains a classification of finite $p$-groups of which elements of order $p$ form a single fusion class, completing the long-term classification problem and confirming Gross' conjecture.

\begin{theorem}\label{thm:gross}
	Let $N$ be a finite $p$-group such that $\Aut(N)$ is transitive on the set of all elements of order $p$.
	Then one of the following statements holds:
	\begin{enumerate}
		\item $N$ is a homocyclic $p$-group;
		\item $N$ is a generalized quaternion $2$-group;
		\item $N$ is a Suzuki $2$-group, and $\Aut(N)$ is solvable with a cyclic subgroup acting transitively on the set of involutions.
	\end{enumerate}
\end{theorem}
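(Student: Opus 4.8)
The plan is to separate the two parities of $p$, dispatch the degenerate cases quickly, and reduce the substantive case $p=2$ to a question about linear groups preserving the commutator and squaring data of a special $2$-group, where the classification of transitive linear groups can be brought to bear.

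First I would clear away the easy cases. For odd $p$, Shult's theorem already forces $N$ to be homocyclic, so (i) holds; hence assume $p=2$. If $N$ has a single involution, the classical result quoted in the introduction gives that $N$ is cyclic or generalized quaternion, yielding (i) or (ii). If $N$ is abelian with more than one involution, write $N=\prod_i C_{2^{a_i}}$ and note that ``being a $2^{k}$th power'' is an $\Aut(N)$-invariant property of an involution; transitivity then forces the exponents $a_i$ to agree, so $N$ is homocyclic and (i) holds. From now on $N$ is non-abelian with more than one involution.

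Next I would fix the module-theoretic setup. Because a nontrivial $2$-group has nontrivial centre, $Z(N)$ contains an involution, and since $\Aut(N)$ preserves $Z(N)$ and is transitive on involutions, every involution is central; thus $Z:=\Omega_1(N)\le Z(N)$ is elementary abelian and the involutions are exactly $Z\setminus\{1\}$. Building on the structure theory of Higman and Gross (refined by Bryukhanova and Wilkens), I would reduce to $N$ being a special $2$-group of exponent at most $4$ with $Z(N)=N'=\Phi(N)=Z$, and set $W:=N/Z$. The commutator then induces an $\Aut(N)$-equivariant surjection $c:\Lambda^2 W\twoheadrightarrow Z$, and squaring induces a Frobenius-twisted quadratic map $s:W\to Z$ with polar form $c$; the absence of involutions outside $Z$ says precisely that $s$ is anisotropic, i.e. $s(w)\ne 0$ for all $w\ne 0$. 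Since the kernel of the action of $\Aut(N)$ on the Frattini quotient $W$ is a $2$-group, proving (iii) amounts to showing that the image $G\le\GL(W)$ is solvable: once this is known, $\Aut(N)$ is a solvable group of automorphisms transitive on the involutions, so Shaw's theorem identifies $N$ as a Suzuki $2$-group and Higman's classification supplies the cyclic subgroup transitive on involutions.

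The heart of the argument, and the step I expect to be hardest, is therefore to rule out a nonsolvable $G$. Since automorphisms trivial on $W$ act trivially on $N'=Z$, the action on $Z$ factors through $G$, and the induced group $\bar A\le\GL(Z)$ is transitive on $Z\setminus\{0\}$; so by the classification of finite transitive linear groups (Hering, via CFSG) $\bar A$ is either contained in $\GammaL_1(|Z|)$ (up to finitely many small exceptions) or has a normal subgroup of type $\SL_a(q)$, $\Sp_a(q)$ or $G_2(q)$. The two invariants $c$ and $s$ are then played against this list. Nondegeneracy of $c$ makes $W$ self-dual up to a twist by $Z$, so $W$ is constrained to sit inside $W^*\otimes Z$ and the relevant composition factors of $\Lambda^2 W$ must, in characteristic $2$, admit an equivariant surjection onto the natural module $Z$ or its dual; for most nonsolvable candidates this already fails. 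The residual configurations---where $Z$ or $Z^*$ genuinely occurs in $\Lambda^2 W$, namely the natural and twisted-natural modules for $\SL$, $\Sp$ and $G_2$ over fields of characteristic $2$---are eliminated using the anisotropy of $s$, which cannot be preserved by so large a transitive group (an invariant anisotropic quadratic datum forces the ambient form into an orthogonal type of very small dimension). Completing this module bookkeeping, in particular the symplectic and $G_2$ cases and the interaction between $c$ and the semilinear map $s$, is the main obstacle; once it is in place $G$ is solvable and the theorem follows.
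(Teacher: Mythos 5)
Your overall strategy is the same as the paper's: odd $p$ via Shult, the one-involution case via the generalized quaternion theorem, the Gross--Bryukhanova--Wilkens reduction to a special $2$-group with $\bfZ(N)=N'=\Phi(N)=\Omega_1(N)$, the passage to the induced linear group $G$ acting on $V=N/\bfZ(N)$ and $M=\bfZ(N)$ with the commutator surjection $\Lambda^2_{\bbF_2}(V)\twoheadrightarrow M$, and finally solvability of $G$ feeding into Shaw's theorem and Higman's classification. The genuine gap sits exactly where you say you ``expect the hardest step'': ruling out nonsolvable $G$ is not deferrable bookkeeping, it is the paper's Theorem~\ref{thm:dual}, a result with real content. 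Carrying it out requires Hering's classification (Theorem~\ref{thm:2trans}), the Galois-descent decomposition of $\Lambda^2_{\bbF_2}(V)$ into $\Lambda^2_{\bbF_q}(U)$ and the twisted tensor summands $U\otimes U^{\phi^j}$ (Lemma~\ref{lem:decompoext}), Steinberg's twisted tensor product theorem to prove those summands are irreducible over $\bbF_2$ (Lemmas~\ref{lem:stein} and~\ref{lem:Bj}), the known structure of $\Lambda^2$ of the natural modules for $\SL_m(q)$, $\Sp_{2m}(q)$ and $\G_2(q)$ in characteristic $2$ (Lemmas~\ref{lem:slext} and~\ref{lem:lambdasp}), and a dimension count resting on $\dim M=\dim V$, i.e.\ $|N|=|\bfZ(N)|^2$ --- a constraint your reduction never isolates (Gross also allows $|N|=|\bfZ(N)|^3$; it is Bryukhanova's theorem that disposes of that case). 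The outcome of all this is that exactly one nonsolvable configuration survives: $G^{(\infty)}\cong\SL_3(2^m)$ with $M$ the \emph{dual} of $V$. None of this analysis is supplied, or even correctly outlined, in your proposal.

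Moreover, your proposed device for killing the residual configurations --- ``an invariant anisotropic quadratic datum forces the ambient form into an orthogonal type of very small dimension'' --- does not work as stated. The squaring map $s\colon V\to M$ is not an $\bbF_2$-valued quadratic form but a map into an $n$-dimensional space, so no orthogonal-group dimension bound applies; indeed the Suzuki $2$-groups $A_2(n,\theta)$ themselves carry exactly such an invariant anisotropic datum with $\GammaL_1(2^n)$ transitive on both $V\setminus\{0\}$ and $M\setminus\{0\}$, so anisotropy alone excludes nothing, and any valid argument must use nonsolvability essentially. What the paper actually does with the squaring map, in the one surviving case, is a precise stabilizer argument: since $(ax)^2=x^2$ for all $a\in\bfZ(N)$, the map $x\bfZ(N)\mapsto x^2$ is well defined and equivariant, forcing $G^{(\infty)}_v\leqs G^{(\infty)}_w$ for $w=x^2$; but the stabilizer in $\SL_3(2^m)$ of a nonzero vector of the natural module fixes no nonzero vector of the dual module (its transvections with center $v$, together with the induced $\SL_2(2^m)$ on $V/\langle v\rangle$, rule out any invariant hyperplane), a contradiction. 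Without Theorem~\ref{thm:dual} and this final stabilizer argument, or genuine substitutes for them, your outline does not constitute a proof.
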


We use $G^{(\infty)}$ for the unique perfect group appearing in the derived series of $G$.
The proof of Theorem~\ref{thm:gross} for $2$-groups depends on certain representations of finite transitive linear groups over $\bbF_2$, stated below, which is independently interesting.

\begin{theorem}\label{thm:dual}
	Let $V=\bbF_2^n$, $G\leqs\GL(V)=\GL_n(2)$ be non-solvable, and $M=\Lambda^2_{\bbF_2}(V)/W$ for some $\bbF_2 G$-submodule $W$ of $\Lambda^2_{\bbF_2}(V)$.
	Assume that $\dim M=\dim V$, the natural action of $G$ on $V$ is transitive on the non-zero vectors of $V$, and the induced action of $G$ on $M$ is transitive on the non-zero vectors of $M$.
	Then $n=3m$, $G^{(\infty)}=\SL_3(2^m)<\GL_{3m}(2)$, and $M$ is the dual $\bbF_2 G^{(\infty)}$-module of $V$.
\end{theorem}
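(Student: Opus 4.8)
The plan is to exploit the classification of finite transitive linear groups (the Hering–Liebeck theorem), which restricts the non-solvable $G \leqs \GL_n(2)$ transitive on nonzero vectors to a short list of possibilities for $G^{(\infty)}$ and the module $V$. So my first step would be to enumerate these cases: the classical group cases where $G^{(\infty)}$ is $\SL_a(q)$, $\Sp_a(q)$, $\mathrm{G}_2(q)'$ (with $q$ even here since the field is characteristic $2$), each acting on its natural module realized as $\bbF_2^n$; together with the finitely many exceptional (sporadic) transitive linear groups. I would then need to read off, for each case, the dimension $n = \dim V$ and understand $\Lambda^2_{\bbF_2}(V)$ as a $G^{(\infty)}$-module.

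\medskip

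The core of the argument is a dimension-and-transitivity count. For each candidate $G^{(\infty)}$ and its natural module $V$ over $\bbF_{2^m}$ viewed as $\bbF_2^n$ with $n = (\dim_{\bbF_{2^m}} V)\cdot m$, I would compute $\dim_{\bbF_2} \Lambda^2(V) = \binom{n}{2}$ and ask whether there is a submodule $W$ with $\dim (\Lambda^2(V)/W) = n$ on which $G$ acts transitively on nonzero vectors. The decisive constraint is the transitivity on $M$: a transitive linear group on $M = \bbF_2^n$ forces $|M| - 1 = 2^n - 1$ to divide $|G|$, and more sharply forces $M$ itself to be (a restriction of) one of the Hering–Liebeck modules for the image of $G$ in $\GL(M)$. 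The plan is to match the composition factors of $\Lambda^2(V)$ against the requirement that a quotient of dimension exactly $n$ be transitive. The claim is that the \emph{only} way both $V$ and the $n$-dimensional quotient $M$ are transitive modules is the coincidence occurring for $\SL_3(2^m)$: here $V$ is the natural $3m$-dimensional module, $\Lambda^2(V)$ over $\bbF_{2^m}$ is the dual natural module $V^*$ (since $\Lambda^2$ of the $3$-dimensional natural $\SL_3$-module is its dual, tensored up to $\bbF_2$), giving exactly $\dim M = 3m = n$ with $M \cong V^*$ transitive. I would verify that $V^*$ is indeed a transitive module for $\SL_3(2^m)$ (it is, by duality/the graph automorphism).

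\medskip

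The main obstacle will be eliminating the other families cleanly rather than by brute dimension arithmetic. For $\SL_a(q)$ with $a \geqs 4$, $\Lambda^2(V)$ over $\bbF_q$ is an irreducible $\binom{a}{2}$-dimensional module which is not isomorphic to $V$ or $V^*$ and whose $\bbF_2$-dimension $\binom{a}{2} m$ generally exceeds $n = am$, so I must show no $n$-dimensional transitive quotient survives; the point is that $\Lambda^2(V)$ has no submodule of the correct codimension, or the quotient fails transitivity. For $\Sp_a(q)$ the exterior square contains the trivial (symplectic form) submodule, and the relevant quotient is the $(a-1)$-dimensional or $(a-2)$-dimensional symplectic-type module, whose dimension does not match $am$ over $\bbF_2$ except possibly in small cases that must be checked by hand. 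The $\mathrm{G}_2(q)$ case and the finitely many exceptional transitive groups (e.g.\ those with $G^{(\infty)}$ among $\SL_2(q)$ for small $q$, $\mathrm{A}_6$, $\mathrm{A}_7$, $\mathrm{SL}_2(13)$, etc.) I would dispose of by direct computation of $\Lambda^2(V)$ and its submodule lattice, likely with explicit character or Magma-assisted verification, since these are finite in number. Throughout, the subtle part is that $W$ need not be a complement and $M$ is only a \emph{section} of $\Lambda^2(V)$, so I must argue at the level of the full submodule lattice and use that an $\bbF_2 G$-module is transitive only if it is, after extending scalars, one of the Hering–Liebeck modules for $G^{(\infty)}$, which pins $M$ down to $V^*$ and forces $a = 3$.
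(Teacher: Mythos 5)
Your overall strategy does coincide with the paper's: reduce via the classification of transitive linear groups over $\bbF_2$ (Theorem~\ref{thm:2trans}), dispose of the sporadic cases by machine computation, and isolate $\SL_3(2^m)$ by dimension-matching on an $n$-dimensional transitive quotient of the exterior square. But there are two genuine gaps in your elimination scheme, and each sits exactly at a near-miss case. First, you analyse $\Lambda^2_{\bbF_q}(U)$, the exterior square of the natural module over its field of definition $\bbF_q$, whereas the theorem concerns $\Lambda^2_{\bbF_2}(V)$, of $\bbF_2$-dimension $\binom{n}{2}$, which is strictly larger. Writing $V=U_{\bbF_2}$, one has $\Lambda^2_{\bbF_2}(V)\cong A\oplus\bigoplus_j B_j$ with $A\cong\Lambda^2_{\bbF_q}(U)_{\bbF_2}$ and $B_j\cong (U\otimes U^{\phi^j})_{\bbF_2}$ (Lemma~\ref{lem:decompoext}), and the $B_j$ are irreducible by Steinberg's twisted tensor product theorem (Lemmas~\ref{lem:stein} and~\ref{lem:Bj}). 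Your case analysis (for $\SL_a$: irreducible of dimension $\binom{a}{2}$; for $\Sp_a$: trivial submodule plus heart) only ever sees the component $A$. The $B_j$ cannot be ignored: for $G^{(\infty)}=\SL_2(2^f)$ with $f$ even, the summand $B_{f/2}$ is a direct summand --- in particular a genuine quotient --- of $\Lambda^2_{\bbF_2}(V)$ of dimension $n^2/(2f)=n$, so it passes your dimension test; the paper eliminates it by identifying the induced group on $B_{f/2}$ with $\Omega^-_4(2^{f/2})$, which is not transitive on nonzero vectors. Without the Galois-descent decomposition you would never even encounter this case.

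The second gap is more serious: your decisive criterion --- that a transitive section of dimension $n$ must be a Hering--Liebeck module for $G^{(\infty)}$, ``which pins $M$ down to $V^*$ and forces $a=3$'' --- is false as stated, precisely because you relax ``quotient'' to ``section''. Take $G^{(\infty)}=\Sp_4(2^f)$, so $n=4f$. In characteristic $2$ the module $\Lambda^2_{\bbF_q}(U)$ has composition factors of dimensions $1$, $4$, $1$, and the $4$-dimensional heart $L(\lambda_2)$ is the twist of the natural module $L(\lambda_1)$ by the exceptional graph automorphism of $\Sp_4$; hence $\Sp_4(2^f)$ acts on this section transitively on its nonzero vectors, and its $\bbF_2$-restriction has dimension exactly $4f=n$. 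So this section passes both your dimension test and your Hering--Liebeck transitivity test, yet $a\neq 3$. The only way to exclude it is to use the full strength of the hypothesis that $M$ is a \emph{quotient}, together with the submodule structure of $\Lambda^2_{\bbF_q}(U)$: when $2\mid m$ this module is indecomposable with trivial head (Lemma~\ref{lem:lambdasp}(ii)), so every irreducible quotient of it is trivial. Two smaller points: the relevant symplectic subquotients have dimensions $\binom{a}{2}-1$ and $\binom{a}{2}-2$, not $a-1$ and $a-2$ as you wrote; and before applying the classification to the image of $G$ on $M$ you must rule out the possibility that $G^{(\infty)}$ acts trivially on $M$, which the paper does by an order count against $|M|-1$.
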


Solving Gross' conjecture is also an important part for the study of some other classes of groups.
A group $N$ is called an \textit{AT-group} if $\Aut(N)$ is transitive on the elements of the same order.
Zhang~\cite{zhang1992finite} presents a description of finite AT-groups.
However, the problem of determining AT-$2$-groups has been left open, as commented that ``If Gross' conjecture on $2$-automorphic $2$-groups is true, ...... We do not know if Suzuki $2$-groups are AT-groups" before Lemma 2.2 of~\cite{zhang1992finite}.
A group is called an {\it FIF-group} if any two elements of the same order are fused or inverse-fused.
Praeger and the first-named author~\cite{li1997Finitea} give a description of finite FIF-groups, with the problem of classifying FIF-$2$-groups left open, see~\cite[Problem 1.5]{li1997Finitea}.
As a corollary of Theorem~\ref{thm:gross}, these problems are solved.
\begin{corollary}\label{cor:AT-2-gps}
	Let $N$ be a non-abelian finite $2$-group with more than one involution.
	Then the following statements are equivalent:
	\begin{enumerate}
		\item cyclic subgroups of the same order are conjugate under $\Aut(N)$;
		\item any two elements of $N$ of the same order are conjugate or inverse-conjugate under $\Aut(N)$;
		\item elements of $N$ of the same order are conjugate under $\Aut(N)$;
		\item $N=A_2(n,\theta)$, $B_2(n)$, or $P(\epsilon)$; see Definitions~$\ref{def:apn}$, $\ref{def:bpn}$ and $\ref{def:pepsilon}$.
	\end{enumerate}
\end{corollary}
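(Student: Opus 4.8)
The plan is to establish the cycle of implications (iv)$\Rightarrow$(iii)$\Rightarrow$(ii)$\Rightarrow$(i)$\Rightarrow$(iv). Two of the arrows are formal. For (iii)$\Rightarrow$(ii) there is nothing to prove, since elements that are conjugate are in particular conjugate-or-inverse-conjugate. For (ii)$\Rightarrow$(i), if $\langle x\rangle$ and $\langle y\rangle$ are cyclic subgroups of the same order, then $x$ and $y$ have the same order, so by (ii) there is $\alpha\in\Aut(N)$ with $x^{\alpha}\in\{y,y^{-1}\}$; as $\langle y^{-1}\rangle=\langle y\rangle$ this gives $\langle x\rangle^{\alpha}=\langle y\rangle$, which is exactly (i).

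The first substantive step is to show that each of (i), (ii), (iii) forces $N$ to be a Suzuki $2$-group. Each condition implies that the involutions of $N$ form a single $\Aut(N)$-class: under (iii) this is the instance of order $2$; under (ii) it suffices to note that an involution is its own inverse, so conjugate-or-inverse-conjugate collapses to conjugate; and under (i) it follows because the subgroups of order $2$ correspond bijectively to the involutions. Thus $\Aut(N)$ is transitive on the involutions of $N$, and since $N$ is non-abelian and has more than one involution, Theorem~\ref{thm:gross} shows that $N$ is a Suzuki $2$-group. In particular $N$ has exponent $4$; its center $Z(N)$ equals $\Omega_1(N)$ and is elementary abelian; every involution lies in $Z(N)$ and these are already fused; and the elements of order $4$ are exactly those of $N\setminus Z(N)$.

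Because a Suzuki $2$-group has exponent $4$, conditions (i), (ii), (iii) concern only the elements of order $4$, the order-$2$ case being automatic from the previous step. Moreover (i) and (ii) coincide here: a cyclic subgroup of order $4$ has precisely the two generators $x$ and $x^{-1}=x^{3}$, so $\langle x\rangle^{\alpha}=\langle y\rangle$ says exactly that $x^{\alpha}\in\{y,y^{-1}\}$. Hence on the class of Suzuki $2$-groups we already have (i)$\Leftrightarrow$(ii) and (iii)$\Rightarrow$(ii), and the corollary reduces to two tasks: (a) for (i)$\Rightarrow$(iv), to show that a Suzuki $2$-group whose order-$4$ elements form a single $\Aut(N)$-orbit up to inversion must be one of $A_2(n,\theta)$, $B_2(n)$, $P(\epsilon)$; and (b) for (iv)$\Rightarrow$(iii), to show that in each of these three families the order-$4$ elements form a single $\Aut(N)$-orbit, which also upgrades inversion-fusion to genuine fusion and closes the cycle.

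The main obstacle is task (a), the classification of Suzuki $2$-groups by the $\Aut(N)$-action on $N\setminus Z(N)$. Here I would run through Higman's list of Suzuki $2$-groups, as organised in Huppert--Blackburn~\cite{huppert1982Finite} and refined in the preceding sections, and for each family compute the orbits of $\Aut(N)$ on the order-$4$ elements using the explicit descriptions of $N$ by its twisted multiplication together with the induced actions of $\Aut(N)$ on the elementary abelian groups $N/Z(N)$ and $Z(N)$ and on the squaring map $x\mapsto x^{2}$ between them. In every family outside the three listed, one exhibits two order-$4$ elements lying in distinct $\Aut(N)$-orbits even after allowing inversion, which rules (i) out; for $A_2(n,\theta)$, $B_2(n)$, and $P(\epsilon)$ the defining field and graph-field automorphisms are seen to act transitively on $N\setminus Z(N)$, giving task (b) and hence (iv)$\Rightarrow$(iii). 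Combining (a) and (b) with the formal implications of the first paragraph yields the four-fold equivalence.
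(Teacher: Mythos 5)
Your formal arrows, the observation that each of (i)--(iii) fuses the involutions, and the consequent reduction to Suzuki $2$-groups via Theorem~\ref{thm:gross} all match the paper and are sound. The genuine gap is your task (a), which is the entire substance of the corollary: you do not prove that a Suzuki $2$-group satisfying (i) must be $A_2(n,\theta)$, $B_2(n)$ or $P(\epsilon)$; you only announce a plan to ``run through Higman's list'' and, in every family outside these three, to ``exhibit two order-$4$ elements lying in distinct $\Aut(N)$-orbits even after allowing inversion.'' That is precisely the hard classification step, and carrying it out family-by-family over Higman's types (with their field parameters, and with $|N|=|\bfZ(N)|^2$ or $|\bfZ(N)|^3$) would require knowing the orbit structure of the \emph{full} automorphism group of every Suzuki $2$-group, which neither you nor Higman's classification supplies; it is essentially the content of Dornhoff's and Bors--Glasby's work, not a routine verification. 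The paper circumvents this entirely: from (i) it deduces that $\Aut(N)$ is transitive on the non-identity elements of $N/\bfZ(N)$ (since $y^{-1}\in y\bfZ(N)$ when $y$ has order $4$) and on the non-identity elements of $\bfZ(N)$, then applies the orbit-counting formula of Lemma~\ref{lem:spext}(iii) to conclude that $\Aut(N)$ has exactly three orbits on $N$, and finally invokes Lemma~\ref{lem:at2gps}, which rests on Dornhoff's classification of groups admitting a solvable automorphism group with three orbits, applicable because $\Aut(N)$ is solvable by Theorem~\ref{thm:gross}(iii). Note that the three-orbit conclusion also yields (iii) at once (the orbits are necessarily $\{1\}$, the involutions, and the order-$4$ elements), so no separate upgrade from inversion-fusion to fusion is needed.

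A secondary inaccuracy concerns your task (b): transitivity of $\Aut(N)$ on $N\setminus\bfZ(N)$ for the three families is not achieved by ``field and graph-field automorphisms'' alone --- those permute the cosets of $\bfZ(N)$ but cannot move an element within its coset. One also needs the kernel $K$ of the action on $N/\bfZ(N)$, which by Lemma~\ref{lem:spext}(i) is an elementary abelian group of order $|\bfZ(N)|^n$ whose elements realize every assignment $x_i\mapsto c_ix_i$ with $c_i\in\bfZ(N)$, and which therefore fuses all elements of a given coset. The paper establishes (iv)$\Rightarrow$(iii) through Lemmas~\ref{lem:antheta}, \ref{lem:bpn} and~\ref{lem:epsilon} (the last by machine computation for $P(\epsilon)$), so this direction, too, is not free in the way your sketch suggests.
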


The three families of groups in Corollary~\ref{cor:AT-2-gps}(iv) are exactly all $2$-groups with $3$ fusion classes, which were also obtained by Bors and Glasby in~\cite{bors2020Finitea}.
We remark that the automorphism groups of the groups $A_2(n,\theta)$, $B_2(n)$ and $P(\epsilon)$ are determined in Lemmas~\ref{lem:antheta}, \ref{lem:bpn} and \ref{lem:epsilon}, respectively.

The layout of the paper is as follows. 
In Section 2, we introduce some methods in representation theory and prove Theorem~\ref{thm:dual}.
We prove Theorem~\ref{thm:gross} in Section~3 by linking special $p$-groups with representation theory and using Theorem~\ref{thm:dual}.
The final section contains the proof of Corollary~\ref{cor:AT-2-gps} together with the constructions of groups in the Corollary.

\subsection*{Acknowledgments}
The authors acknowledge the support of NNSFC grant no.~11931005.
Moreover, they would like to thank the anonymous referee for their careful reading of the paper and valuable suggestions.

\section{Transitive linear groups and their exterior squares}

All fields and groups are finite in this section.
For a field $\F$ and a group $G$, an \textit{$\F G$-reprensentation} is a group homomorphism $\rho$ from $G$ to $\GL(V)$ for some $\F$-space $V$.
Every $\F G$-reprensentation $\rho$ induces an $\F G$-module structure on $V$ by $(xg)\cdot v=x\cdot v^{\rho(g)}$ for $x\in \F$ and $g\in G$.
If $\E$ is an extension field of $\F$, then $V^\E:=V\otimes \E$ is naturally an $\E G$-module.
Some elementary properties of the functor $- \otimes \E$ can be found in~\cite[Section VII.1]{huppert1982Finite}, \cite[Chapter 9]{aschbacher2000Finite} and \cite[Chapter 9]{isaacs1976Character}.
For an $\F G$-module $V$ with representation $\rho$, we use the following two notations:
\begin{itemize}
	\item denote by $V^\sigma$ the $\F G$-module induced by $\sigma\circ \rho$ for any field automorphism $\sigma$ of $\F$;
	\item denote by $V_\K$ the $\K G$-module of $V$ for subfield $\K$ of $\F$.
\end{itemize}

Some important propositions are recorded blow.

\begin{lemma}\label{lem:prop}
	Suppose that $\K\subset \F\subset \E$ are field extensions, and $V, W$ are $\F G$-modules.
	Then the following statements hold:
	\begin{enumerate}
		\item {\rm\cite[Chapter VII, 1.16(a)]{huppert1982Finite}} $(V_\K)^\F\cong \sum_{\sigma\in\Gal(\F/\K)}V^\sigma$;
		\item {\rm\cite[25.7(2)]{aschbacher2000Finite}} if there exists an $\E G$-submodule $U$ of $V^\E$ such that $U^\sigma=U$ for any $\sigma\in\Gal(\E/\F)$, then $U=(V_0)^\E$ for some $\F G$-submodule $V_0$ of $V$;
		\item {\rm[Deuring-Noether, see~\cite[Chapter VII, 1.22]{huppert1982Finite}]} $V^\E\cong W^\E$ if and only if $V\cong W$.
	\end{enumerate}
\end{lemma}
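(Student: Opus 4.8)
The three statements are the standard base-change and Galois-descent facts for modular representations, and since $\K\subset\F\subset\E$ are extensions of finite fields, every extension in sight is Galois (finite fields are perfect, so all extensions are separable and normal). The strategy in each case is to reduce a question about $G$-modules to one about the scalar algebras, where $G$ acts trivially, and then to reinstate the commuting $G$-action at the end. The one nontrivial algebraic input is the decomposition
\[
\F\otimes_\K\F\;\cong\;\prod_{\sigma\in\Gal(\F/\K)}\F,
\]
valid because $\F/\K$ is separable and normal; the isomorphism may be taken to send $a\otimes b$ to $(a\,\sigma(b))_\sigma$.

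For (i), I would use associativity of the tensor product to write $(V_\K)^\F=V_\K\otimes_\K\F\cong V\otimes_\F(\F\otimes_\K\F)$, matching the $\F$-structure of $V$ with the \emph{left} tensor factor, so that the new $\F$-structure and the scalars carrying the $G$-twist come from the right factor. Applying the displayed decomposition of $\F\otimes_\K\F$ and distributing the tensor product over the finite product yields $\bigoplus_{\sigma\in\Gal(\F/\K)}\bigl(V\otimes_\F\F_\sigma\bigr)$, where $\F_\sigma$ denotes $\F$ with its $\F$-module structure twisted by $\sigma$. One checks $V\otimes_\F\F_\sigma\cong V^\sigma$, so summing over $\sigma\in\Gal(\F/\K)$ gives $\bigoplus_\sigma V^\sigma$; since $g\in G$ acts only on the $V$-factor this is an isomorphism of $\F G$-modules, exactly the claim. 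A dimension check ($\dim_\F$ of both sides equals $(\dim_\F V)\,|\Gal(\F/\K)|$) confirms nothing has been lost.

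For (ii), the group $\Gal(\E/\F)$ acts on $V^\E=V\otimes_\F\E$ through the second factor, $v\otimes\lambda\mapsto v\otimes\sigma(\lambda)$, and this action commutes with the $G$-action $g\otimes 1$. I would set $V_0:=U^{\Gal(\E/\F)}$, the fixed $\F$-subspace of the assumed Galois-stable $\E$-submodule $U$. Classical Galois descent for vector spaces—provable directly from the normal basis theorem or, equivalently, from $\E$-linear independence of the distinct automorphisms $\sigma$—gives a natural isomorphism $V_0\otimes_\F\E\cong U$. Because the $G$-action commutes with the Galois action, $V_0$ is $G$-stable, hence an $\F G$-submodule of $V$ with $(V_0)^\E=U$, as required. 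The point to verify carefully here is the surjectivity/dimension in the descent isomorphism, i.e.\ that the fixed space is large enough; this is precisely where separability (Galois-ness) of $\E/\F$ is used.

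For (iii) the forward direction is immediate by applying $-\otimes_\F\E$. For the converse I would pass to restriction of scalars: choosing an $\F$-basis of $\E$ identifies $(V^\E)_\F\cong V^{\oplus[\E:\F]}$ and likewise $(W^\E)_\F\cong W^{\oplus[\E:\F]}$ as $\F G$-modules. An isomorphism $V^\E\cong W^\E$ then restricts to $V^{\oplus[\E:\F]}\cong W^{\oplus[\E:\F]}$, and the Krull--Schmidt theorem for finitely generated $\F G$-modules forces $V\cong W$: the multiplicity of each indecomposable summand is scaled by the common factor $[\E:\F]$ on both sides, so the (unique) multiplicities in $V$ and $W$ agree. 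I expect this cancellation step—invoking uniqueness of the indecomposable decomposition to divide out $[\E:\F]$—to be the only place demanding care, the rest being formal.
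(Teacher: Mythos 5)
Your proposal is correct, but note that the paper offers no proof of this lemma at all: it is recorded as a list of known results, with each part cited to the literature (part~(i) to Huppert--Blackburn VII.1.16(a), part~(ii) to Aschbacher 25.7(2), part~(iii) to the Noether--Deuring theorem). Your three arguments are essentially the standard proofs given in those sources: the ring decomposition $\F\otimes_\K\F\cong\prod_{\sigma\in\Gal(\F/\K)}\F$ for~(i), fixed-point Galois descent $V_0=U^{\Gal(\E/\F)}$ with $G$-stability following from the commuting actions for~(ii), and restriction of scalars plus Krull--Schmidt cancellation of the multiplicity $[\E:\F]$ for~(iii). Two small points you handled implicitly but correctly: in~(i) the convention ambiguity of whether $V\otimes_\F\F_\sigma$ realizes $V^\sigma$ or $V^{\sigma^{-1}}$ is harmless because the sum runs over all of $\Gal(\F/\K)$; and in~(iii) Krull--Schmidt is legitimately available because the paper's standing hypothesis (all fields and groups finite, modules finite-dimensional) is in force, and finiteness of the fields also guarantees that every extension is Galois, as your descent argument in~(ii) requires.
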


The \textit{exterior square} $\Lambda^2_{\F}(V)$ of an $\F G$-module $V$ is $(V\otimes V)/S$ with $S=\langle v\otimes v:v\in V\rangle$.
Note that if $V=V_1\oplus\cdots \oplus V_n$, we have
\[\Lambda^2_{\F}(V)\cong \bigoplus_{i=1}^n\Lambda_{\F}^2(V_i)\oplus \bigoplus_{1\leqs i<j\leqs n} V_i\otimes V_j.\]
Then we can prove the following lemma for decomposing exterior squares.

\begin{lemma}\label{lem:decompoext}
	Suppose that $V$ is an $\F G$-module and $\E$ is an extension field of $\F$.
	If $U$ is an $\E G$-module such that $U_{\F}\cong V$, then the following statements hold:
	\begin{enumerate}
		\item $V^\E\cong\bigoplus_{i=1}^f U^{\phi^i}$ where $\phi$ is a generator of $\Gal(\E/\F)$ and $f=[\E:\F]$;
		\item $\Lambda^2_\F(V)=A\oplus \bigoplus_{j=1}^{\lfloor \frac{f}{2}\rfloor}B_j$ where
		\begin{enumerate}
			\item $A\cong \Lambda_\E^2(U)_\F$;
			\item $B_j\cong \bigl(U\otimes U^{\phi^j}\bigr)_\F$, when $1\leqs j < f/2$;
			\item $2\cdot B_{f/2}\cong \bigl(U\otimes U^{\phi^{f/2}}\bigr)_\F$, when $f$ is even.
		\end{enumerate}
	\end{enumerate}
\end{lemma}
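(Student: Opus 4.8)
The plan is to prove (i) directly from Lemma~\ref{lem:prop}(i), and then to obtain (ii) by extending scalars to $\E$, decomposing the exterior square of the direct sum $V^\E\cong\bigoplus_i U^{\phi^i}$, and descending each Galois-stable block back to $\F$ using Lemma~\ref{lem:prop}(ii), while pinning down its isomorphism type with the Deuring--Noether theorem, Lemma~\ref{lem:prop}(iii). For (i), I apply Lemma~\ref{lem:prop}(i) with the roles of $\K,\F,V$ there played by $\F,\E,U$ here. Since $V\cong U_\F$, this gives $V^\E\cong(U_\F)^\E\cong\bigoplus_{\sigma\in\Gal(\E/\F)}U^\sigma=\bigoplus_{i=1}^f U^{\phi^i}$, using that $\Gal(\E/\F)=\langle\phi\rangle$ is cyclic of order $f$ with $U^{\phi^f}=U$.

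For (ii), I start from the base-change isomorphism $\Lambda^2_\F(V)^\E\cong\Lambda^2_\E(V^\E)$, feed in (i) together with the displayed direct-sum formula for exterior squares, and obtain
\[\Lambda^2_\F(V)^\E\cong\bigoplus_{i=1}^f\Lambda^2_\E\!\bigl(U^{\phi^i}\bigr)\ \oplus\bigoplus_{1\le i<k\le f}U^{\phi^i}\otimes U^{\phi^k}.\]
The key observation is that the natural semilinear action of $\Gal(\E/\F)$ on $\Lambda^2_\F(V)^\E$ corresponds, under this isomorphism, to the cyclic shift $U^{\phi^i}\mapsto U^{\phi^{i+1}}$ of the summands (indices read modulo $f$). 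Consequently the diagonal block $D:=\bigoplus_{i}\Lambda^2_\E(U^{\phi^i})$ is stable, and the off-diagonal blocks group by the value of the index difference $k-i$: for each $1\le j<f/2$ the summands with difference in $\{j,f-j\}$ form a stable block $C_j$ of $f$ terms, while, when $f$ is even, the summands with difference $f/2$ form a stable block $C_{f/2}$ of $f/2$ terms. By Lemma~\ref{lem:prop}(ii) each of $D,C_1,\dots$ descends to an $\F G$-submodule $A,B_1,\dots$ of $\Lambda^2_\F(V)$ with $A^\E\cong D$ and $B_j^\E\cong C_j$; comparing $\F$-dimensions and using that the $A^\E,B_j^\E$ are independent in $\Lambda^2_\F(V)^\E$ then forces $\Lambda^2_\F(V)=A\oplus\bigoplus_j B_j$. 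I finally read off the isomorphism types: since $(\Lambda^2_\E(U)_\F)^\E\cong\bigoplus_i\Lambda^2_\E(U^{\phi^i})=D\cong A^\E$, Lemma~\ref{lem:prop}(iii) gives $A\cong\Lambda^2_\E(U)_\F$; likewise $((U\otimes U^{\phi^j})_\F)^\E\cong\bigoplus_i U^{\phi^i}\otimes U^{\phi^{i+j}}$, which for $1\le j<f/2$ reproduces $C_j$ exactly once, yielding $B_j\cong(U\otimes U^{\phi^j})_\F$.

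The delicate point, and the main obstacle, is the antipodal case $j=f/2$ for even $f$: here the $f$ products $U^{\phi^i}\otimes U^{\phi^{i+f/2}}$ realize each unordered pair $\{i,i+f/2\}$ \emph{twice}, so $((U\otimes U^{\phi^{f/2}})_\F)^\E\cong 2\,C_{f/2}\cong(2B_{f/2})^\E$, and Deuring--Noether gives $2\cdot B_{f/2}\cong(U\otimes U^{\phi^{f/2}})_\F$ rather than an isomorphism with $B_{f/2}$ itself. Getting this bookkeeping right --- separating the diagonal contribution, the single appearance of each difference pair $\{j,f-j\}$ for $j<f/2$, and the doubled appearance at the self-paired difference $f/2$ --- is the crux; once it is sorted out, the remainder is a formal application of the three parts of Lemma~\ref{lem:prop}. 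I would also take care to justify the claimed cyclic-shift description of the Galois action, since it is exactly this stability that makes Lemma~\ref{lem:prop}(ii) applicable to each block.
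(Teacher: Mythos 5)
Your proposal is correct and follows essentially the same route as the paper's proof: base-change to $\E$, decompose $\Lambda^2_\E(V^\E)$ into the diagonal block plus off-diagonal blocks grouped by index difference (each a full Galois orbit, hence stable), descend via Lemma~\ref{lem:prop}(ii), and identify the pieces with Deuring--Noether, including the doubling phenomenon at $j=f/2$. Your explicit dimension-count justification of the internal direct sum and your flagged care about the cyclic-shift Galois action are just slightly more detailed renderings of steps the paper treats tersely.
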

\begin{proof}
	Part~(i) is directly deduced by Lemma~\ref{lem:prop}(i).

	Note that $\Lambda^2_\F(V)=(V\otimes V)/S$ with $S=\langle v\otimes v:v\in V\rangle$, then $\Lambda^2_\F(V)^\E\cong (V\otimes V)^\E/S^\E\cong \Lambda_\E^2(V^\E)$.
	Thus, we obtain that
	\[\begin{aligned}
		\Lambda^2_\F(V)^\E&\cong \Lambda_\E^2(V^\E)\cong
		\Lambda_\E^2(U\oplus U^\phi\oplus\cdots\oplus U^{\phi^{f-1}})\\
		&\cong \bigoplus_{i=1}^f\Lambda_\E^2(U)^{\phi^i}\oplus \bigoplus_{1\leqs i<j\leqs f}U^{\phi^i}\otimes U^{\phi^j}.
	\end{aligned}\]
	We use the notations:
	\begin{itemize}
		\item $\widetilde{A}\cong \bigoplus_{i=1}^f\Lambda_\E^2(U)^{\phi^i}$;
		\item $\widetilde{B}_j\cong \bigoplus_{i=1}^{f}\bigl(U\otimes U^{\phi^j}\bigr)^{\phi^i}$ for $1\leqs j<\frac{f}{2}$; and
		\item $\widetilde{B}_{f/2}=\bigoplus_{i=1}^{f/2}\bigl(U\otimes U^{\phi^{f/2}}\bigr)^{\phi^i}$ when $f$ is even.
	\end{itemize}
	Hence, $\Lambda^2_\F(V)^\E\cong \widetilde{A}\oplus \bigoplus_{j=1}^{\lfloor \frac{f}{2}\rfloor}\widetilde{B}_j$ by rearranging the summands.
	Note that $\phi$ fixes $\widetilde{A}$ and $\widetilde{B}_j$ for each $1\leqs j\leqs\frac{f}{2}$, then there exist $\F G$-submodules $A$ and $B_j$'s of $\Lambda^2_\F(V)$ such that $A^\E\cong \widetilde{A}$ and $B_j^\E\cong\widetilde{B}_j$ by Lemma~\ref{lem:prop}(ii).
	Thus, we have $\Lambda^2_\F(V)=A\oplus\bigoplus_{j=1}^{\lfloor \frac{f}{2}\rfloor}B_j$ by Lemma~\ref{lem:prop}(iii).

	Lemma~\ref{lem:prop}(i) deduces that
	\[(\Lambda_\E^2(U)_\F)^\E\cong \bigoplus_{i=1}^f\Lambda_\E^2(U)^{\phi^i}=\widetilde{A}\cong A^\E.\]
	Thus, $A\cong \Lambda_\E^2(U)_\F$ by Lemma~\ref{lem:prop}(iii), as in~(1) of part~(ii).
	Similarly, we obtain that $B_j\cong \bigl(U\otimes U^{\phi^j}\bigr)_\F$, when $1\leqs j < f/2$, as in~(2) of part~(ii).
	When $j$ is even, we have
	\[\left(\bigl(U\otimes U^{\phi^{f/2}}\bigr)_\F\right)^\E\cong \bigoplus_{i=1}^f\bigl(U\otimes U^{\phi^{f/2}}\bigr)^{\phi^i}\cong 2\cdot \bigoplus_{i=1}^{f/2}\bigl(U\otimes U^{\phi^{f/2}}\bigr)^{\phi^i}\cong 2\cdot \widetilde{B}_{f/2}.\]
	Hence, $2\cdot B_{f/2}\cong U\otimes U^{\phi^{f/2}}$ deduced by Lemma~\ref{lem:prop}(iii), as in~(3) of part~(ii).
\end{proof}
\begin{remark}
	Let $V$ be an irreducible $\F G$-module.
	Then $\E:=\End_{\F G}(V)$ is an extension field of $\F$ for  $V$ by Schur's Lemma and Wedderburn's Little Theorem.
	Then $V$ has an $\E G$-module structure defined by $\alpha g:v\mapsto v^{\alpha g}=v^{g\alpha}$ for $\alpha\in\E$ and $g\in G$.
	Let $U$ be the $\E G$-module of $V$ defined as above.
	Then $U_\F\cong V$.
	Hence, the decomposition of exterior square of an irreducible $\F G$-modules can be well-treated by extending the field to $\End_{\F G}(V)$.
\end{remark}

Let $V=\bbF_2^n$ be a vector space, and let $G\leqs\GL(V)\cong\GL_n(2)$ be transitive on $V\setminus\{0\}$.
These groups $G$ are known by the classification of the finite $2$-transitive permutation groups, and we list them in the next theorem, see~\cite[Table~7.3]{cameron1999Permutation}.

\begin{theorem}\label{thm:2trans}
	Suppose that $G\leqs\GL_n(2)$ acts transitively on non-zero vectors of $V=\bbF_2^n$.
	Then one of the following statements holds:
	\begin{enumerate}
		\item $G\leqs\GammaL_1(2^n)$ is solvable;
		\item $(G,n)=(\A_6,4)$, $(\Sp_{4}(2),4)$, $(\A_7,4)$, $(\PSU_3(3),6)$ or $(\G_2(2),6)$;
		\item $G^{(\infty)}$ is transitive on non-zero vectors and is one of the following groups:
		\begin{enumerate}
			\item $\SL_m(2^f)$ with $m\geqs 2$ and $(m,f)\neq(2,1)$;
			\item $\Sp_{2m}(2^f)$ with $m\geqs 2$ and $(m,f)\neq (2,1)$; or
			\item $\G_2(2^f)$ with $f\geqs 2$.
		\end{enumerate}
	\end{enumerate}
\end{theorem}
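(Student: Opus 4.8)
The plan is to recognise Theorem~\ref{thm:2trans} as the specialization to the prime field $\bbF_2$ of Hering's classification of transitive linear groups, which is in turn the affine case of the classification of finite $2$-transitive permutation groups. First I would record the dictionary between the two formulations: a subgroup $G\leqs\GL(V)$ is transitive on $V\setminus\{0\}$ precisely when the affine group $V\rtimes G$, acting on $V$ with $V$ by translations and $G$ by its linear action, is $2$-transitive with point stabiliser $G$. This places $G$ in the scope of Hering's theorem, and the resulting list is exactly \cite[Table~7.3]{cameron1999Permutation}. Since that classification rests on the Classification of Finite Simple Groups, the whole argument is an appeal to CFSG together with a finite amount of bookkeeping; there is no elementary self-contained route.

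Next I would specialize Hering's list to $q=2$ and sort the candidates into the three conclusions. The possibilities are: (a) $G\leqs\GammaL_1(2^n)$; (b) $G$ normalizes one of $\SL_m(2^f)$, $\Sp_{2m}(2^f)$ (with $mf=n$ or $2mf=n$), or $\G_2(2^f)$ (with $6f=n$); and (c) finitely many exceptional groups in small dimension. Case~(a) gives conclusion~(i): over the prime field $\GammaL_1(2^n)=\bbF_{2^n}^\times\rtimes\Gal(\bbF_{2^n}/\bbF_2)\cong\bbZ_{2^n-1}\rtimes\bbZ_n$ is metacyclic, hence solvable, and conversely every $G$ in this class is solvable. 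The exceptional members surviving over $\bbF_2$ are precisely $\A_6$, $\Sp_4(2)$ and $\A_7$ in dimension $4$, together with $\PSU_3(3)$ and $\G_2(2)$ in dimension $6$, a fact one reads off from the table; these give conclusion~(ii). All remaining candidates are governed by the families $\SL$, $\Sp$, $\G_2$, feeding into conclusion~(iii).

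For conclusion~(iii) I would then verify two points for each surviving family. The first is transitivity on $V\setminus\{0\}$: this is classical for $\SL_m(2^f)$ on its natural module and for $\Sp_{2m}(2^f)$ on its $2m$-dimensional module (in the symplectic case every nonzero vector is isotropic and the group is transitive on such vectors), and for $\G_2(2^f)$ on its $6$-dimensional module in characteristic $2$ it follows from the standard description of the orbits. The second is the identification of the perfect core. Writing $L$ for the relevant normal subgroup, one has $G\leqs\bfN_{\GL_n(2)}(L)$, and this normalizer induces on $L$ only scalars and field automorphisms, so $\bfN_{\GL_n(2)}(L)/L$ is solvable; hence $G/L$ is solvable and $G^{(\infty)}\leqs L$, while $L$ perfect and normal in $G$ gives $L\leqs G^{(\infty)}$, whence $G^{(\infty)}=L$. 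The constraints in the statement are exactly what keep $L$ perfect: $\SL_2(2)\cong\Sym(3)$ is solvable and lands in~(i), while $\Sp_4(2)\cong\Sym(6)$ and $\G_2(2)$ fail to be perfect and appear among the exceptions in~(ii); the surviving $\SL_m(2^f)$ and $\Sp_{2m}(2^f)$ are quasisimple and $\G_2(2^f)$ with $f\geqs2$ is simple.

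The main obstacle, were one to attempt a genuinely self-contained proof, is Hering's theorem itself: pinning down all transitive linear groups is as hard as CFSG and cannot be bypassed. Granting \cite[Table~7.3]{cameron1999Permutation}, the only real care needed is the bookkeeping at $q=2$, namely correctly assigning the borderline groups $\SL_2(2)$, $\Sp_4(2)$, $\A_6$, $\A_7$, $\PSU_3(3)$ and $\G_2(2)$ to cases~(i), (ii) or~(iii) according to whether they are solvable, non-perfect exceptions, or perfect members of the infinite families.
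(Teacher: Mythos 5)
Your proposal is correct and matches the paper's treatment exactly: the paper offers no independent proof of Theorem~\ref{thm:2trans}, but simply cites the classification of finite $2$-transitive permutation groups (Hering's theorem in the affine case, via \cite[Table~7.3]{cameron1999Permutation}) and records its specialization to $q=2$. Your additional bookkeeping --- the affine-group dictionary, the solvability of $\GammaL_1(2^n)$, the placement of the borderline groups $\SL_2(2)$, $\Sp_4(2)$, $\A_6$, $\A_7$, $\PSU_3(3)$, $\G_2(2)$, and the identification $G^{(\infty)}=L$ via the solvable quotient of the normalizer --- is sound and merely makes explicit what the paper leaves to the cited table.
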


The following lemma can be directly deduced by Steinberg's Twisted Tensor Product Theorem~\cite{steinberg1963Representations} (also see~\cite[Theorem 5.4.1]{kleidman1990subgroup}).
\begin{lemma}\label{lem:stein}
	Suppose that $G\cong \Sp_{2m}(p^f)$, $\SL_m(p^f)$ or $\G_2(p^f)$ acting naturally on $\bbF_{p^f}$-space $U$ for prime $p$.
	Let $\phi:x\mapsto x^p$ be the Frobenius automorphism of $\bbF_{p^f}$.
	For $1\leqs i,j,k,\ell\leqs f-1$, the following statements hold:
	\begin{enumerate}
		\item $U^{\phi^i}\otimes U^{\phi^j}$ is an irreducible $\bbF_{p^f}G$-module for $i\neq j$;
		\item for $i\neq j$ and $k\neq \ell$,
		$U^{\phi^i}\otimes U^{\phi^j}\cong U^{\phi^k}\otimes U^{\phi^\ell}$ if and only if $\{i,j\}=\{k,\ell\}$.
	\end{enumerate}
\end{lemma}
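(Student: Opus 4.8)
The plan is to translate both assertions into the language of highest weights and then invoke Steinberg's Twisted Tensor Product Theorem directly. First I would realise $U$ as the restriction to $G$ of the irreducible module $L(\omega_1)$ for the ambient simple algebraic group $\mathbf{G}$ over $\overline{\bbF_p}$, where $\omega_1$ is the first fundamental weight. For $\SL_m$ and $\Sp_{2m}$ the natural module is $L(\omega_1)$ directly; for $\G_2(2^f)$ one uses that in characteristic $2$ the $7$-dimensional Weyl module has a trivial submodule with $6$-dimensional irreducible quotient $L(\omega_1)$, which is the module $U$ here. Since every coordinate of $\omega_1$ lies in $\{0,1\}$, the weight $\omega_1$ is $p$-restricted, and the Frobenius twist $U^{\phi^i}$ is the irreducible module $L(p^i\omega_1)$, so that $U=U^{\phi^0},U^{\phi^1},\dots,U^{\phi^{f-1}}$ realise the Frobenius-twisted simple modules occurring in Steinberg's theorem.

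For part~(i), fix distinct $i,j$ with $1\le i,j\le f-1$. The highest weight of $U^{\phi^i}\otimes U^{\phi^j}$ is $p^i\omega_1+p^j\omega_1$, whose base-$p$ expansion has the $p$-restricted weight $\omega_1$ in positions $i$ and $j$ and zero elsewhere. Steinberg's theorem then gives
\[
L(p^i\omega_1+p^j\omega_1)\cong L(\omega_1)^{\phi^i}\otimes L(\omega_1)^{\phi^j}=U^{\phi^i}\otimes U^{\phi^j},
\]
exhibiting $U^{\phi^i}\otimes U^{\phi^j}$ as an absolutely irreducible module. As it is a tensor product of $\bbF_{p^f}G$-modules it is defined over $\bbF_{p^f}$, so absolute irreducibility yields irreducibility as an $\bbF_{p^f}G$-module, proving~(i).

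For part~(ii), I would use that the absolutely irreducible $\overline{\bbF_p}G$-modules are classified by their $p^f$-restricted highest weights. For distinct indices in $\{1,\dots,f-1\}$ the coefficient $p^i+p^j$ of $\omega_1$ satisfies $p^i+p^j\le p^{f-1}+p^{f-2}<p^f$, so $p^i\omega_1+p^j\omega_1$ is genuinely $p^f$-restricted. Therefore $U^{\phi^i}\otimes U^{\phi^j}\cong U^{\phi^k}\otimes U^{\phi^\ell}$ holds if and only if $p^i+p^j=p^k+p^\ell$, and by uniqueness of base-$p$ expansions with distinct digits this is equivalent to $\{i,j\}=\{k,\ell\}$.

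The main obstacle is not the computation but ensuring the hypotheses of Steinberg's theorem are correctly in force: verifying that $U=L(\omega_1)$ in each of the three families (the $\G_2$ case in characteristic $2$ being the only nonobvious identification), confirming that $\omega_1$ is $p$-restricted so that the twisted tensor decomposition is legitimate, and tracking the passage between $\overline{\bbF_p}$ and $\bbF_{p^f}$ so that absolute irreducibility and the highest-weight classification descend to honest statements over $\bbF_{p^f}$. Once this bookkeeping is done, both parts are immediate consequences of the highest-weight parameterisation.
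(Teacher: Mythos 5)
Your proposal is correct and follows exactly the route the paper intends: the paper gives no written proof, stating only that the lemma ``can be directly deduced by Steinberg's Twisted Tensor Product Theorem'' (citing Steinberg and \cite[Theorem 5.4.1]{kleidman1990subgroup}), and your argument is precisely that deduction, spelled out via the identification $U\cong L(\omega_1)$, the $p^f$-restricted highest-weight parameterisation of irreducibles for the finite group, and descent from $\overline{\bbF_p}$ to $\bbF_{p^f}$.
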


We say an $\E G$-module $U$ can be \textit{written over} a subfield $\F\leqs\E$ if the matrix of each $g\in G$, with respect to some basis, has entries in $\F$.
Hence, $U$ can be written over $\F$ if and only if $U^\sigma\cong U$ for any $\sigma\in\Gal(\E/\F)$.
In addition, $U_\K$ is irreducible for any subfield $\F$ of $\E$ if and only if $U$ cannot be written over any subfield, see~\cite[26.5]{aschbacher2000Finite}.
Immediately we obtain the following lemma.
\begin{lemma}\label{lem:Bj}
	Suppose that $G\cong \Sp_{2m}(q)$, $\SL_m(q)$ or $\G_2(q)$ with $q=p^f$ for some prime $p$.
	Let $V$ be the natural $\bbF_pG$-module, and let $U$ be the natural $\bbF_qG$-module. Then
	\begin{enumerate}
		\item $V^{\bbF_q}\cong U\oplus U^\phi\oplus\cdots\oplus U^{\phi^{f-1}}$, where $\phi:x\mapsto x^p$ the Frobenius automorphism of $\bbF_q$;
		\item $(U\otimes U^{\phi^j})_{\bbF_p}$ is irreducible for $j<\frac{f}{2}$;
		\item when $f$ is even, $(U\otimes U^{\phi^{f/2}})_{\bbF_p}$ is a direct sum of two isomorphic copies of irreducible $\bbF_p G$-modules.
	\end{enumerate}
\end{lemma}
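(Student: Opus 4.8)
The plan is to deduce part~(i) directly from Lemma~\ref{lem:prop}(i), and to obtain parts~(ii) and~(iii) uniformly by computing, for each relevant $j$, the Galois stabiliser of the $\bbF_q G$-module $W_j:=U\otimes U^{\phi^j}$ and then applying the subfield irreducibility criterion recalled above (\cite[26.5]{aschbacher2000Finite}). For~(i), the point is simply that the natural $\bbF_p G$-module $V$ is nothing but $U_{\bbF_p}$, the natural $\bbF_q G$-module regarded over the prime field. Taking $\K=\bbF_p$ and $\F=\bbF_q$ in Lemma~\ref{lem:prop}(i) gives $V^{\bbF_q}=(U_{\bbF_p})^{\bbF_q}\cong\bigoplus_{\sigma\in\Gal(\bbF_q/\bbF_p)}U^\sigma$, and since $\Gal(\bbF_q/\bbF_p)=\langle\phi\rangle$ is cyclic of order $f$ this is exactly $U\oplus U^\phi\oplus\cdots\oplus U^{\phi^{f-1}}$.

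The technical core is the determination of $H_j:=\{k: W_j^{\phi^k}\cong W_j\}\leqs\Gal(\bbF_q/\bbF_p)$. Writing $W_j^{\phi^k}\cong U^{\phi^k}\otimes U^{\phi^{k+j}}$ with all exponents read modulo $f$, Lemma~\ref{lem:stein}(i) ensures each $W_j$ is irreducible over $\bbF_q$, and Lemma~\ref{lem:stein}(ii) shows that $W_j^{\phi^k}\cong W_j$ holds precisely when the unordered pairs satisfy $\{k,k+j\}\equiv\{0,j\}\pmod f$. I expect this to force $k\equiv 0$ when $1\leqs j<f/2$, so that $H_j$ is trivial; whereas for $j=f/2$ (hence $f$ even) it permits exactly $k\equiv 0$ and $k\equiv f/2$, so that $H_{f/2}=\langle\phi^{f/2}\rangle$ has order $2$.

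With these stabilisers in hand the two conclusions follow. For~(ii), triviality of $H_j$ means $W_j$ is fixed by no nontrivial element of $\Gal(\bbF_q/\bbF_p)$, so it cannot be written over any proper subfield, and the criterion then yields that $(W_j)_{\bbF_p}$ is irreducible; equivalently, the $f$ twists $W_j^{\phi^i}$ are pairwise non-isomorphic and are permuted transitively by $\phi$, so the only $\phi$-stable $\bbF_q G$-submodules of $(W_j)_{\bbF_p}^{\bbF_q}\cong\bigoplus_i W_j^{\phi^i}$ are $0$ and the whole module (using Lemma~\ref{lem:prop}(i),(ii)). For~(iii), the order-$2$ stabiliser means $W:=W_{f/2}$ can be written over $\bbF_{p^{f/2}}$, say $W\cong\bar W^{\bbF_q}$ for an irreducible $\bbF_{p^{f/2}}G$-module $\bar W$. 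Restricting through the degree-$2$ extension $\bbF_{p^{f/2}}\subset\bbF_q$ gives $(W)_{\bbF_{p^{f/2}}}\cong\bar W\oplus\bar W$, whence $(W)_{\bbF_p}\cong 2\cdot(\bar W)_{\bbF_p}$; and since $\bar W^{\bar\phi^k}\cong\bar W$ is equivalent, via Lemma~\ref{lem:prop}(iii), to $W^{\phi^k}\cong W$, the stabiliser of $\bar W$ in $\Gal(\bbF_{p^{f/2}}/\bbF_p)$ is trivial, so $(\bar W)_{\bbF_p}$ is irreducible. The main obstacle is this last bookkeeping in~(iii): correctly matching Galois twists of the descended module $\bar W$ with those of $W$, and justifying the factor of two, all while keeping the exponents consistent modulo $f$.
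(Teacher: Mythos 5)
Your proposal is correct and takes essentially the same route as the paper's proof: part~(i) via Lemma~\ref{lem:prop}(i), and parts~(ii)--(iii) by computing the Galois stabiliser of $U\otimes U^{\phi^j}$ through Lemma~\ref{lem:stein} and then applying the subfield irreducibility criterion together with descent to $\bbF_{p^{f/2}}$. The only (cosmetic) divergence is in~(iii), where you obtain the factor of two directly from restricting the extended module $\bar W^{\bbF_q}$ and match twists of $\bar W$ with twists of $W$ via Deuring--Noether, whereas the paper reaches the same identifications by a chain of applications of Lemma~\ref{lem:prop}(i)--(iii).
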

\begin{proof}
	For convenience, let $\E=\bbF_{q}$, $\F=\bbF_p$, and let $\L=\bbF_{p^{f/2}}$ when $f$ is even.

	It is clear that $U_\F\cong V$, and then part~(i) holds by Lemma~\ref{lem:decompoext}(i).

	Suppose that $1\leqs j< f/2$.
	Then we have $(U\otimes U^{\phi^j})^{\phi^k}\cong U\otimes U^{\phi^i}$ if and only if $f\mid k$ by lemma~\ref{lem:stein}.
	Hence, $U\otimes U^{\phi^j}$ cannot be written over any subfield of $\E$, and thus $(U\otimes U^{\phi^j})_\F$ is irreducible, as in part~(ii).

	Assume from now that $f$ is even.
	Note that $\Gal(\E/\L)=\langle\phi^{f/2}\rangle$ and $U\otimes U^{\phi^{f/2}}$ is normalized by $\phi^{f/2}$.
	By Lemma~\ref{lem:prop}(ii), there exists an $\L G$-module $W$ such that $W^{\E}\cong U\otimes U^{\phi^{f/2}}$.
	Lemma~\ref{lem:prop}(i) deduces that
	\[((U\otimes U^{\phi^{f/2}})_\L)^{\E}\cong (U\otimes U^{\phi^{f/2}})\oplus (U\otimes U^{\phi^{f/2}})^{\phi^{f/2}}\cong 2\cdot U\otimes U^{\phi^{f/2}}\cong 2\cdot W^{\E}.\]
	Hence, $(U\otimes U^{\phi^{f/2}})_\L=2\cdot W$ by Lemma~\ref{lem:prop}(iii).
	In particular, we have
	\[(U\otimes U^{\phi^{f/2}})_{\F}=((U\otimes U^{\phi^{f/2}})_{\L})_{\F}=2\cdot W_\F.\]
	We only need to show that $W_\F$ is an irreducible $\F G$-module, that is, $W$ cannot be written over any subfield of $\L$.

	Let $\psi$ be the image of $\phi$ in $\Gal(\L/\F)$.
	Then $\Gal(\L/\F)=\langle\psi\rangle$.
	Suppose that $W^{\psi^k}\cong W$ for some $k$.
	We have 
	\[\begin{aligned}
		W^{\psi^k}\cong W&\Rightarrow (2\cdot W)^{\psi^k}\cong 2\cdot W\Rightarrow((U\otimes U^{\phi^{f/2}})_\L)^{\psi^k}\cong (U\otimes U^{\phi^{f/2}})_\L\\
		&\Rightarrow((U\otimes U^{\phi^{f/2}})^{\phi^k})_\L\cong (U\otimes U^{\phi^{f/2}})_\L\\
		&\Rightarrow((U^{\phi^k}\otimes U^{\phi^{f/2+k}})_\L)^{\E}\cong ((U\otimes U^{\phi^{f/2}})_\L)^{\E}\\
		&\Rightarrow 2\cdot U^{\phi^k}\otimes U^{\phi^{f/2+k}}\cong 2\cdot U\otimes U^{\phi^{f/2}}.
	\end{aligned}\]
	Note that $U^{\phi^k}\otimes U^{\phi^{f/2+k}}\cong U\otimes U^{\phi^{f/2}}$ if and only if $k$ is divisible by $\frac{f}{2}$ by Lemma~\ref{lem:stein}.
	Hence, we have $\psi^k=1$, and then $W$ cannot be written over any subfield of $\L$.
	It follows that $W_{\F}$ is an irreducible $\F G$-module, which proves part~(iii).	
\end{proof}

We record some classical results for exterior squares of natural modules for $\SL_m(q)$, $\Sp_{2m}(q)$ and $\G_2(q)$.

\begin{lemma}[{\cite[Theorem 2.2]{liebeck1987affine}}]\label{lem:slext}
	Let $U=\bbF_q^m$, and let $G=\SL_m(q)$.
	Then $\Lambda^2_{\bbF_q}(U)$ is an irreducible $\bbF_q G$-module which cannot be written over any subfield of $\bbF_q$.

	In particular, if $m=3$, then $\Lambda^2_{\bbF_q}(U)$ is isomorphic to the dual $\bbF_q G$-module of $V$.
\end{lemma}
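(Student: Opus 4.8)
The plan is to establish the three assertions in turn, realising $\Lambda^2_{\bbF_q}(U)$ as the fundamental module of highest weight $\omega_2$ and then reducing the subfield statement and the $m=3$ identification to formal consequences.

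\emph{Irreducibility.} First I would recognise $\Lambda^2_{\bbF_q}(U)$ as the Weyl module of highest weight $\omega_2$ (the second fundamental weight in type $A_{m-1}$) for the algebraic group $\overline{G}=\mathrm{SL}_m(\overline{\bbF_p})$. Since $\omega_2$ is minuscule, its Weyl module coincides with the simple module $L(\omega_2)$ in every characteristic, so $\Lambda^2_{\bbF_q}(U)$ is absolutely irreducible of dimension $\binom{m}{2}$ for $\overline{G}$; and as $\omega_2$ is $p$-restricted (its only nonzero coefficient is $1<p$), Steinberg's restriction theorem keeps it absolutely irreducible on restriction to $G=\SL_m(q)$, so $\End_{\bbF_q G}(\Lambda^2_{\bbF_q}(U))=\bbF_q$. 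If one prefers a field-uniform argument valid even at $q=2$ (where the diagonal torus is trivial), the same follows from a direct computation with the root elements $x_{ij}(t)\colon e_j\mapsto e_j+te_i$: applying $x_{ij}(t)-1$ to a nonzero element of a submodule $W$ isolates a single basis bivector $e_k\wedge e_l\in W$, after which the monomial subgroup of $G$ spreads it, up to sign, over all $e_k\wedge e_l$, forcing $W=\Lambda^2_{\bbF_q}(U)$. The hard part is precisely this irreducibility over small finite fields: one cannot separate the weight lines by diagonalisation and must invoke either the minuscule/Steinberg machinery or the explicit root-element calculation; once irreducibility is in hand, the remaining claims are formal.

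\emph{Not realisable over a subfield.} By the criterion recalled before Lemma~\ref{lem:Bj}, it suffices to show that $\Lambda^2_{\bbF_q}(U)^{\phi^i}\not\cong\Lambda^2_{\bbF_q}(U)$ for $0<i<f$, where $\phi\colon x\mapsto x^p$. Now $\Lambda^2_{\bbF_q}(U)^{\phi^i}\cong\Lambda^2_{\bbF_q}(U^{\phi^i})$ is the Frobenius twist $L(\omega_2)^{[i]}$, carrying highest weight $p^i\omega_2$. By Steinberg's tensor product theorem the modules $L(\omega_2)^{[i]}$ for $0\le i<f$ correspond to distinct restricted-weight tuples, hence are pairwise non-isomorphic $\SL_m(q)$-modules as soon as $\omega_2\ne0$, i.e.\ $m\ge3$. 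This gives $\Lambda^2_{\bbF_q}(U)^{\phi^i}\not\cong\Lambda^2_{\bbF_q}(U)$ for $0<i<f$, so $\Lambda^2_{\bbF_q}(U)$ cannot be written over any proper subfield of $\bbF_q$.

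\emph{The dual module when $m=3$.} Here I would use the wedge pairing
\[
\Lambda^2_{\bbF_q}(U)\times U\longrightarrow \Lambda^3_{\bbF_q}(U),\qquad (x,\,v)\longmapsto x\wedge v,
\]
which is $G$-bilinear and nondegenerate. Since $\Lambda^3_{\bbF_q}(U)$ is one-dimensional and $G=\SL_3(q)$ acts on it through $\det=1$, that is trivially, fixing an isomorphism $\Lambda^3_{\bbF_q}(U)\cong\bbF_q$ turns this into a $G$-invariant nondegenerate bilinear form $\Lambda^2_{\bbF_q}(U)\times U\to\bbF_q$. This form identifies $\Lambda^2_{\bbF_q}(U)$ with $\Hom_{\bbF_q}(U,\bbF_q)$, the dual $\bbF_q G$-module of $U$, completing the proof.
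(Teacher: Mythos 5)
Your proof is correct, but note first that the paper itself offers no proof of this lemma: it is imported wholesale as Theorem~2.2 of Liebeck's paper on affine permutation groups of rank three, so your argument is necessarily a different route --- a self-contained one through highest-weight theory. Your three steps are sound: $\Lambda^2_{\bbF_q}(U)$ is the Weyl module of the minuscule weight $\omega_2$, hence equals $L(\omega_2)$ and is absolutely irreducible for the algebraic group, and Steinberg's restriction theorem (the weight $\omega_2$ being $q$-restricted) keeps it absolutely irreducible for $\SL_m(q)$; the subfield claim follows from the Galois criterion recorded before Lemma~\ref{lem:Bj} once one knows the twists $L(p^i\omega_2)$, $0\le i<f$, are pairwise non-isomorphic, which holds by Steinberg's theorem for the finite group because these weights are distinct and all $q$-restricted (and non-isomorphism over $\overline{\bbF_p}$ descends to non-isomorphism over $\bbF_q$ by Lemma~\ref{lem:prop}(iii), a point worth saying explicitly); and the nondegenerate $G$-invariant pairing $\Lambda^2_{\bbF_q}(U)\times U\to\Lambda^3_{\bbF_q}(U)\cong\bbF_q$, which uses $\det g=1$, is the standard identification with the dual module when $m=3$. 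Beyond being independent of the citation, your treatment buys two things: it works uniformly in every characteristic (the paper only needs $p=2$), and it makes explicit a hypothesis the stated lemma suppresses --- the non-realizability over subfields genuinely requires $m\ge 3$ (equivalently $\omega_2\ne 0$), since for $m=2$ the exterior square is the trivial one-dimensional module, which certainly can be written over $\bbF_p$. This imprecision in the statement is harmless for the paper, because in the proof of Theorem~\ref{thm:dual} the case $m=2$ is eliminated anyway by the dimension count $f\binom{m}{2}=n=mf$, but your version is the one that should be quoted.
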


Results of exterior square of the natural modules of symplectic groups can be found in~\cite{carter2005Lie,korhonen2019Jordan,mcninch1998Dimensional}, and the result of the exceptional group of Lie type $\G_2(q)<\Sp_6(q)$ (even $q$) can be deduced from~\cite[Theorem 8.14]{seitz1987maximal}.

\begin{lemma}\label{lem:lambdasp}
	Suppose that $U=\bbF_q^{2m}$ is a symplectic space with $q=2^f$, and $G=\Sp_{2m}(q)$ or $\G_2(q)$($m=3$).
	Then $\Lambda^2_{\bbF_q}(U)$ has a maximal $\bbF_qG$-submodule $T$ of codimension $1$.
	Moreover, $T$ is indecomposable and one the following statements holds:
	\begin{enumerate}
		\item if $p\nmid m$ then $T$ is irreducible and cannot be written over any subfield;
		\item if $p\mid m$, then $\Lambda^2_{\bbF_q}(U)$ is indecomposable and $T$ has a unique maximal $\bbF_qG$-submodule $T_0$ which is trivial, and $T/T_0$ cannot be written over any subfield.
	\end{enumerate}
\end{lemma}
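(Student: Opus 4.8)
The plan is to produce the two canonical $\bbF_q G$-submodules of $\Lambda^2_{\bbF_q}(U)$ that arise from the symplectic form, pin down their relative position by a one-line parity computation, and then read off all the structural assertions by feeding this into the known module theory of the fundamental module for the symplectic group in characteristic $2$.

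First I would use the symplectic form $\langle\,,\,\rangle$ on $U$ to define the contraction $\pi:\Lambda^2_{\bbF_q}(U)\to\bbF_q$, $u\wedge v\mapsto\langle u,v\rangle$; this descends to $\Lambda^2_{\bbF_q}(U)=(U\otimes U)/S$ precisely because the form is alternating, so $\langle v,v\rangle=0$. As each $g\in G$ preserves the form, $\pi$ is a surjection of $\bbF_q G$-modules onto the trivial module, whence $T:=\ker\pi$ is an $\bbF_q G$-submodule of codimension $1$; since $\Lambda^2_{\bbF_q}(U)/T$ is the irreducible trivial module, $T$ is maximal, as required. Next, fixing a symplectic basis $e_1,f_1,\dots,e_m,f_m$ with $\langle e_i,f_i\rangle=1$, the vector $z:=\sum_{i=1}^m e_i\wedge f_i$ is $G$-invariant (it is the image of the invariant form under $U\cong U^*$), so $T_0:=\langle z\rangle$ is a $1$-dimensional trivial submodule. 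The position of $T_0$ relative to $T$ is then settled by the single evaluation $\pi(z)=\sum_{i=1}^m\langle e_i,f_i\rangle=m\cdot 1$ in $\bbF_q$: since $p=2$, when $p\nmid m$ we have $z\notin T$ and hence $\Lambda^2_{\bbF_q}(U)=T_0\oplus T$, whereas when $p\mid m$ we have $z\in T$, so $T_0\subseteq T$.

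With these submodules in hand, the remaining structural claims reduce to the module theory of the fundamental module $L(\omega_2)$ for $\Sp_{2m}$ in characteristic $2$. I would cite \cite{carter2005Lie,korhonen2019Jordan,mcninch1998Dimensional} to obtain that in case~(i) the module $T$ is irreducible (of dimension $2m^2-m-1$), and that in case~(ii) the quotient $T/T_0$ is irreducible while $T$, and indeed $\Lambda^2_{\bbF_q}(U)$, are indecomposable with $T_0$ the unique minimal submodule. The self-duality $\Lambda^2_{\bbF_q}(U)\cong(\Lambda^2_{\bbF_q}(U))^*$, inherited from $U\cong U^*$, organizes this picture: the trivial head $\Lambda^2_{\bbF_q}(U)/T$ is dual to the trivial socle $T_0$, consistent with the uniserial shape $0\subset T_0\subset T\subset\Lambda^2_{\bbF_q}(U)$ in case~(ii). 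For the exceptional case $G=\G_2(q)<\Sp_6(q)$, where $m=3$ places us in case~(i), I would instead deduce the irreducibility of the $14$-dimensional module $T$ from \cite[Theorem 8.14]{seitz1987maximal} via restriction along the embedding.

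Finally, to show that $T$ (in case~(i)) and $T/T_0$ (in case~(ii)) cannot be written over any proper subfield, I would invoke the criterion recalled before Lemma~\ref{lem:Bj} (see \cite[26.5]{aschbacher2000Finite}): such a module is realizable over $\bbF_{2^e}<\bbF_q$ only if it is fixed by a nontrivial power of the Frobenius $\phi:x\mapsto x^2$. As the relevant composition factor is the $2$-restricted irreducible of nonzero highest weight $\omega_2$ (respectively the restricted $\G_2$-module), Steinberg's tensor product theorem \cite{steinberg1963Representations} guarantees that its twists $T^{\phi^j}$ for $0\le j<f$ are pairwise non-isomorphic, so no nontrivial power of $\phi$ fixes it, and it is therefore not defined over any proper subfield. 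I expect the genuine difficulty to lie not in these formal steps but in case~(ii): establishing that the extension $0\to T_0\to T\to T/T_0\to 0$ is non-split — equivalently, that $\Lambda^2_{\bbF_q}(U)$ is uniserial rather than decomposable — is exactly the point that the soft form-theoretic arguments above do not reach and that requires the detailed module-theoretic input of the cited references.
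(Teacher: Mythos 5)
Your proposal is correct and takes essentially the same route as the paper: the paper offers no argument for this lemma beyond citing \cite{carter2005Lie,korhonen2019Jordan,mcninch1998Dimensional} for the symplectic case and \cite[Theorem 8.14]{seitz1987maximal} for $\G_2(q)$, and that is exactly where you place the genuinely hard content (irreducibility of $T$ when $2\nmid m$, and the non-split uniserial structure $0\subset T_0\subset T\subset\Lambda^2_{\bbF_q}(U)$ when $2\mid m$). Your elementary scaffolding --- the contraction $\pi(u\wedge v)=\langle u,v\rangle$ cutting out $T$, the invariant vector $z=\sum_i e_i\wedge f_i$ with the parity evaluation $\pi(z)=m$, and the Frobenius-twist argument (distinct $q$-restricted highest weights $2^j\omega_2$) for non-realizability over proper subfields --- is sound and is consistent with how the paper itself treats the analogous subfield questions in Lemmas~\ref{lem:stein} and~\ref{lem:Bj}.
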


Now we are ready to prove Theorem~\ref{thm:dual}.

\begin{proof}[Proof of Theorem~$\ref{thm:dual}$]

	Let $G\leqs\GL(V)$ with $V=\bbF_2^n$, and let $M=\Lambda^2_{\bbF_2}(V)/W$ for a submodule $W$.
	Assume that $G$ is transitive on $V\setminus\{0\}$.
	Then $G$ is listed in Theorem~\ref{thm:2trans}.
	If $G$ is one of the small groups listed in Theorem~\ref{thm:2trans}(ii), then computations in \Magma~\cite{magma} shows that $\dim M=1$, which is a contradiction as $\dim V=\dim M=1$ implies $G=\GL_1(2)=1$.
	Thus, we only need to focus on the cases that $G^{(\infty)}\cong\SL_{m}(q)$ ($m\geqs 2$ and $(m,f)\neq(2,1)$), $\Sp_{2m}(q)$ ($m\geqs 2$ and $(m,q)\neq(2,2)$) or $\G_2(q)$ ($q\geqs 4$) for $q=2^f$.

	Let $K$ be the kernel of $G$ acting on $M$.
	Then $G/K$ is faithful and transitive on $M\setminus\{0\}$, and either $G^{(\infty)}\leqs K$ or $K\cap G^{(\infty)}\leqs\bfZ(G^{(\infty)})$.
	Suppose that $G^{(\infty)}\leqs K$.
	Then $G/K$ is isomorphic to a quotient group of $G/G^{(\infty)}\lesssim\GammaL(1,2^f)$.
	So
	\[|G/K|\leqs |\GammaL(1,2^f)|=f(2^f-1)\leqs \frac{n}{2}(2^{\frac{n}{2}}-1)<2^n-1=|M|-1,\]
	which is a contradiction since $G/K$ is intransitive on $M\setminus\{0\}$.

	We thus have that $K\cap G^{(\infty)}\leqs\bfZ(G^{(\infty)})$.
	So $G/K$ is faithful and transitive on $M\setminus\{0\}$, and has a simple composition factor isomorphic to one of $\PSL_m(q)$ ($m\geqs 2$ and $(m,f)\neq(2,1)$), $\PSp_{2m}(q)$ ($m\geqs 2$ and $(m,q)\neq(2,2)$) and $\G_2(q)$ ($q\geqs 4$) for even $q$.
	Note that none of these groups is isomorphic to $\A_6$, $\A_7$ or $\PSU_3(3)$, inspecting the candidates for $G/K$ in Theorem~\ref{thm:2trans}, we conclude that $(G/K)^{(\infty)}\cong G^{(\infty)}$.
	It follows that $K\cap G^{(\infty)}=1$ and $G^{(\infty)}$ is transitive on $M\setminus\{0\}$, and further $G^{(\infty)}$ is also transitive on $V\setminus\{0\}$.

	Let $U$ be the natural $\bbF_qG^{(\infty)}$-module, and let $\phi:x\mapsto x^2$ be the Frobenius automorphism of $\bbF_q$.
	Clear that $U_{\bbF_2}\cong V$.
	By Lemma~\ref{lem:decompoext}, $\Lambda^2_{\bbF_2}(V)$ is a direct sum of $\bbF_2G^{(\infty)}$-submodules $A$ and $B_j$'s for $1\leqs j\leqs\lfloor\frac{f}{2}\rfloor$, where	
	\begin{enumerate}
		\item $A\cong \Lambda_{\bbF_q}^2(U)_{\bbF_2}$;
		\item $B_j\cong (U\otimes U^{\phi^j})_{\bbF_2}$ for $1\leqs j<\frac{f}{2}$;
		\item when $f$ is even, $2\cdot B_{f/2}\cong (U\otimes U^{\phi^{f/2}})_{\bbF_2}$.
	\end{enumerate}
	Lemma~\ref{lem:Bj} deduces that each $B_j$ is an irreducible $\bbF_2G^{(\infty)}$-module.
	Suppose that $B_j$ is not a subspace of $W$ for some $j$.
	Then $M\cong B_j$ as $\bbF_2G^{(\infty)}$-modules.
	It follows that $\dim M=\dim B_j$.
	Note that $\dim B_j=n^2/f$ for $j\neq f/2$ and $\dim B_{f/2}=n^2/(2f)$ when $f$ is even.
	Hence, $M\cong B_j$ only when $j=f/2$, $f=n/2$ and $G^{(\infty)}\cong\SL_2(2^{f})$.
	In this case, the induced linear group of $G^{(\infty)}$ on $B_{f/2}$ is isomorphic to $\Omega^{-}_4(2^{f/2})$ (see \cite[page 45]{kleidman1990subgroup}), which cannot be transitive on non-zero vectors of $\bbF_2^{2f}$.
	Thus, $B_j\leqs W$ for each $j$, and then $M$ is $\bbF_2G^{(\infty)}$ isomorphic to a quotient of $A\cong(\Lambda^2_{\bbF_q}(U))_{\bbF_2}$.
	
	Suppose that $G^{(\infty)}\cong \Sp_{2m}(q)$ for $m\geqs 2$ (or $G^{(\infty)}\cong G_2(q)$) with $q=2^f$.
	Let $T$ and $T_0$ be $\bbF_q G^{(\infty)}$-submodules of $\Lambda^2_{\bbF_q}(U)$ give in Lemma~\ref{lem:lambdasp}.
	Assume that $m$ is even.
	Then $\Lambda^2_{\bbF_{q}}(U)$ is indecomposable with $T$ a maximal $\bbF_q G^{(\infty)}$-submodule by Lemma~\ref{lem:lambdasp}.
	Note that $\Lambda^2_{\bbF_{q}}(U)/T$ is a trivial $\bbF_{q}G^{(\infty)}$-module.
	This yields that $M$ is a trivial $\bbF_2 G^{(\infty)}$-module, a contradiction.
	Assume that $m$ is odd.
	Then $T$ is irreducible and cannot be written over any subfield by Lemma~\ref{lem:lambdasp}.
	Hence, $T_{\bbF_2}$ is an irreducible $\bbF_2 G^{(\infty)}$-module.
	This deduces that $M\cong S_{\bbF_2}$ as $\bbF_2T$-modules.
	Note that $\dim T_{\bbF_2}=f\cdot(\binom{n/f}{2}-1)$.
	By some elementary calculations, we obtain that $\dim T_{\bbF_2}=\dim M=n$ has no no integer solutions.

	Suppose that $G^{(\infty)}\cong \SL_m(q)$ for some $m\geqs 2$ and $q=2^f$.
	Lemma~\ref{lem:slext} says $\Lambda^2_{\bbF_q}(U)$ is an irreducible $\bbF_q G^{(\infty)}$-module and cannot be written over any subfield.
	Hence, $(\Lambda^2_{\bbF_q}(U))_{\bbF_2}$ is an irreducible $\bbF_q G_0$-module and $M\cong (\Lambda^2_{\bbF_q}(U))_{\bbF_2}$.
	Note that $\dim (\Lambda^2_{\bbF_q}(U))_{\bbF_2}=f\cdot \binom{n/f}{2}$ is equal to $\dim M=n$.
	We obtain that $f=n/3$, $m=3$ and $G^{(\infty)}\cong\SL_3(2^{n/3})$.
	In this case, $M\cong A\cong\Lambda^2_{\bbF_q}(U)_{\bbF_2}$ is isomorphic to the dual of $V$ deduced by Lemma~\ref{lem:slext}.
\end{proof}

\section{Proof of Theorem~\ref{thm:gross}}\label{sec:gross}

For a non-abelian special $p$-group $N$, the automorphism group $A=\Aut(N)$ naturally acts on the characteristic subgroup $\bfZ(N)$ and induces an action on the factor group $N/\bfZ(N)$.
Viewing the elementary abelian $p$-groups $N/\bfZ(N)$ and $\bfZ(N)$ as vector spaces over the field $\bbF_p$,
the actions of $A$ on them can be viewed as linear actions, and these give rise to two natural $\bbF_pA$-modules, denoted by $V$ and $M$, respectively.
The following lemma enables us to employ methods from group representation theory to study special $p$-groups.

\begin{lemma}\label{lem:spext}
	Let $N$ be a non-abelian special $p$-group with $p$ prime such that $N/\bfZ(N)\cong \bbZ_p^n$.
	Let $K$ and $L$ be the kernels of $A:=\Aut(N)$ acting on $V:=N/\bfZ(N)$ and $M:=\bfZ(N)$, respectively.
	Let $G=A/K$.
	Then the following statements hold:
	\begin{enumerate}
		\item{\rm\cite[Lemma 3]{glasby2011groups}} $K$ is an elementary abelian $p$-group of order $|\bfZ(N)|^n$;
		\item $K\leqs L$, and then both $V$ and $M$ are natural $\bbF_p G$-modules;
		\item the number of $A$-orbits on $N$ is equal to $o(V)+o(M)-1$ where $o(V)$ and $o(M)$ are numbers of $G$-orbits on $V$ and $M$ respectively;
		\item $M\cong\Lambda^2_{\bbF_p}(V)/W$ for some $\bbF_p G$-submodule $W$ of $\Lambda^2_{\bbF_p}(V)$.
	\end{enumerate}
\end{lemma}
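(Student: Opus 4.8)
The plan is to base everything on the commutator map together with the structure of the kernel $K$, proving (ii) and (iv) quickly and reserving the real work for (iii). Throughout, write $\bar{x}$ for the image of $x\in N$ in $V=N/\bfZ(N)$, and consider the map $\beta\colon V\times V\to M$ given by $\beta(\bar{x},\bar{y})=[x,y]$. This is well defined because $\bfZ(N)$ is central, so $[x,y]$ depends only on $\bar{x},\bar{y}$; it is $\bbF_p$-bilinear since commutators lie in the abelian subgroup $[N,N]$ (so conjugation acts trivially on them), and it is alternating since $[x,x]=1$. As $N$ is special we have $M=\bfZ(N)=[N,N]$, so $\beta$ is surjective. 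Finally $\beta$ is $G$-equivariant: for $a\in A$ with image $g=aK\in G$ one has $\beta(\bar{x}^g,\bar{y}^g)=[x^a,y^a]=[x,y]^a=\beta(\bar{x},\bar{y})^a$.

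For part~(ii) I would argue directly. If $\alpha\in K$ then $x^\alpha\in x\bfZ(N)$ for every $x$, say $x^\alpha=xz_x$ with $z_x\in\bfZ(N)$ central; hence $[x,y]^\alpha=[x^\alpha,y^\alpha]=[x,y]$. Thus $\alpha$ fixes every commutator, and since these generate $M=[N,N]$, the automorphism $\alpha$ acts trivially on $M$, i.e. $\alpha\in L$. So $K\leqs L$, and therefore the $A$-action on $M$ factors through $A/K=G$; consequently both $V$ and $M$ are $\bbF_pG$-modules. Part~(iv) is then immediate from the universal property of the exterior square: the alternating $\bbF_p$-bilinear map $\beta$ factors uniquely through a linear map $\tilde{\beta}\colon\Lambda^2_{\bbF_p}(V)\to M$ with $\tilde{\beta}(\bar{x}\wedge\bar{y})=[x,y]$, which inherits $G$-equivariance and surjectivity from $\beta$. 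Setting $W=\ker\tilde{\beta}$, an $\bbF_pG$-submodule, yields $M\cong\Lambda^2_{\bbF_p}(V)/W$.

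The substance lies in part~(iii), and the main obstacle is to understand the $K$-action on the cosets of $\bfZ(N)$. For $\alpha\in K$ define $f_\alpha\colon N\to\bfZ(N)$ by $f_\alpha(x)=x^{-1}x^\alpha$. A short computation, using that each value $f_\alpha(x)$ is central, shows that $f_\alpha$ is a homomorphism vanishing on $[N,N]=\bfZ(N)$, hence descends to an element of $\Hom(V,M)$; moreover $\alpha\mapsto f_\alpha$ is an injective homomorphism $K\to\Hom(V,M)$. At this point I would invoke part~(i): since $|K|=|\bfZ(N)|^n=|M|^n=|\Hom(V,M)|$, this injection is in fact an isomorphism. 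Consequently, for any fixed $\bar{x}\neq 0$ the evaluation $f\mapsto f(\bar{x})$ is onto $M$, so $K$ acts transitively on every coset $x\bfZ(N)$ with $x\notin\bfZ(N)$.

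It then remains to assemble the orbit count. Since $K\trianglelefteq A$ acts transitively on each nontrivial coset and the $A$-action on $V$ factors through $G$, two elements of $N\setminus\bfZ(N)$ lie in the same $A$-orbit precisely when their images lie in the same $G$-orbit on $V\setminus\{0\}$; hence $N\setminus\bfZ(N)$ carries $o(V)-1$ orbits. As $\bfZ(N)=M$ is an $A$-invariant subset with $o(M)$ orbits, and the two pieces partition $N$, the total number of $A$-orbits is $\bigl(o(V)-1\bigr)+o(M)=o(V)+o(M)-1$, proving part~(iii). The point to watch is that the whole argument hinges on the order equality from part~(i) to upgrade the injection $K\hookrightarrow\Hom(V,M)$ to a bijection; without it the coset-transitivity of $K$, and hence the clean orbit count, would not follow.
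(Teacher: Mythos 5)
Your proposal is correct and follows essentially the same route as the paper: parts (ii) and (iv) via the commutator map and the universal property of the exterior square, and part (iii) by the same counting argument --- your identification of $K$ with $\Hom(V,M)$ is just the coordinate-free form of the paper's bijection between $K$ and the tuples $(c_1,\dots,c_n)$ of central elements recording the action on generators, with both arguments hinging on part (i) to upgrade an injection to a bijection and thereby get transitivity of $K$ on the nontrivial cosets of $\bfZ(N)$. One small repair: the bilinear map $\beta$ itself need not be surjective (not every element of $[N,N]$ is a single commutator); what you actually need, and what holds, is that the image of the induced linear map $\tilde{\beta}$ is a subspace of $M$ containing all commutators, hence equals $M=[N,N]$.
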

\begin{proof}
	Let $C=\bfZ(N)=\Phi(N)=N'$ for convenience.
	By definition, $V:=N/C$ is a faithful $\bbF_pG$-module as $K$ is the kernel of $G$ on $V$.

	Suppose that $N=\langle x_1,...,x_n\rangle$.
	Then $M=N'$ is generated by commutators $[x_i,x_j]$ for $i,j=1,...,n$.
	By definition, each element $\sigma\in K$ fixes all of the cosets $\bfZ(N)x_i$, with $1\leqs i\leqs n$, so
	\[\mbox{$x_i^\sigma= a_ix_i$, where $a_i\in \bfZ(N)$ for each $i=1,...,n$.}\]
	Hence, $[x_i,x_j]^\sigma=[a_ix_i,a_jx_j]=[x_i,x_j]$ as $a_i,a_j\in \bfZ(N)$.
	Thus $K$ acts trivially on $M$, and so $K\leqs L$.
	It follows that $G=A/K$ acts on $M$ with kernel $L/K$, which yields that $M$ is an $\bbF_pG$-module, as in~(ii).

	Note that $A$ normalizes $C$, and partitions $N$ into two parts $C$ and $N\setminus C$.
	Thus, the number of $A$-orbits on $N$ is equal to the sum of numbers of $A$-orbits on $C$ and on $N\setminus C$.
	As usual, let `$\sim$' denote conjugation relation.

	First, as $K\le L$ and $L$ fixes every element of $C$, for any elements $a,b\in C=M$,
	\[\mbox{$a\sim b$ under $A$ $\Longleftrightarrow a\sim b$ under $A/L$ $\Longleftrightarrow a\sim b$ under $A/K=G$.}\]
	Thus the $A$-orbits on $C$ equal the $G$-orbits on $M=C$, and hence the number of $A$-orbits on $C$ is equal to $o(M)$.

	Next, as $K$ fixes each coset $Cx_i$, for each $\alpha\in K$, there exist $c_1,\dots,c_n\in C$ such that
	\[(x_1,\dots,x_n)^\alpha=(c_1x_1,\dots,c_nx_n).\]
	Thus $\alpha$ is uniquely determined by the tuple $(c_1,\dots,c_n)$, and so
	\[|K|\le|\{(c_1,\dots,c_n)\mid c_i\in C\}|=|C|^n.\]
	By Lemma~\ref{lem:spext}\,(i), we have $|K|=|C|^n$, and thus $|K|=|\{(c_1,\dots,c_n)\mid c_i\in C\}|$.
	Hence elements of $K$ and the tuples $(c_1,\dots,c_n)$ with $c_i\in C$ are one-to-one correspondent.
	It follows that elements of the same coset $Cx$ are conjugate under $K$.
	Hence, two elements $x,y\in N\setminus C$ are conjugate under $\Aut(N)$ if and only if the cosets $Cx,Cy$ are conjugate under $G=A/K$.
	Equivalently, the number of $A$-orbits on $N\setminus C$ is equal to the number of $G$-orbits on non-identity elements of $N/C=V$, which equals $o(V)-1$.
	Thus, the number of $A$-orbits on $N$ is equal to $o(V)+o(M)-1$, as in part~(iii).
	
	For $x,y,z\in N$, we notice that $[x,y]=[y,x]^{-1}$ and $[x,yz]=[x,y]^z[x,z]=[x,y][x,z]$ as $N'=\bfZ(N)$.
	Hence the commutator operator induces an alternating linear map:
	\[\mbox{$\varphi:\Lambda^2_{\bbF_p}(V)=V\wedge V\rightarrow M$ such that $\varphi: xC\wedge yC\mapsto [x,y]$.}\]
	For any element $g\in G$, we have that
	\[(xC\wedge yC)^{g\varphi}=[(xC)^g,(yC)^g]=[x,y]^g=(xC\wedge yC)^{\varphi g}.\]
	Thus $\varphi$ is an $\bbF_pG$-homomorphism from $\Lambda^2_{\bbF_p}(V)$ to $M$, and $M$ is isomorphic to $\Lambda^2_{\bbF_p}(V)/W$ for some $\bbF_p G$-submodule $W$ of $\Lambda^2_{\bbF_p}(V)$, as in (iv).
\end{proof}

Now we are ready to produce a list of finite $p$-groups with a single fusion class of elements of order $p$, and prove Theorem~\ref{thm:gross}.

\begin{proof}[Proof of Theorem~$\ref{thm:gross}$]

	Let $N$ be a $p$-group of which subgroups of order $p$ are all fused.

	If $N$ is abelian, then obviously $N$ is homocyclic.
	If $p$ is an odd prime, then $N$ is a homocyclic group by Shult~\cite{shult1969finite}.
	If $p=2$ and $N$ has only one involution, then $N$ is a generalized quaternion group.

	We thus assume that $N$ is a non-abelian $2$-group which has more than one involution, and $\Aut(N)$ is transitive on the involutions.
	By Gross \cite{gross1976automorphic}, one of the following holds:
		\begin{enumerate}
			\item $N$ has exponent $4$ and nilpotence class $2$, $|N|=|\bfZ(N)|^2$ or $|\bfZ(N)|^3$, and $N'=N^2=\bfZ(N)$;
			\item $N$ has exponent $8$ with nilpotence class $3$.
		\end{enumerate}
	By Bryukhanova~\cite{bryukhanova1981Automorphism}, a group $N$ in case~(i) with $|N|=|\bfZ(N)|^3$ indeed has $\Aut(N)$ solvable, and is a Suzuki $2$-group.
	Wilkens~\cite{wilkens1996note} excluded groups in case~(ii), that is, the nilpotence class of $N$ is not equal to $3$.

	To complete the proof, we may assume that $N$ is a non-abelian special $2$-group of order equal to $|\bfZ(N)|^2$.

	Then $\Phi(N)=N'N^2=N'=\bfZ(N)$, and $N$ is a special $2$-group.
	Let $V$ and $M$ be the vector space corresponding to $N/\bfZ(N)$ and $\bfZ(N)$ respectively.
	Let $G=\Aut(N)^{N/\bfZ(N)}$, the group induced by $\Aut(N)$ on the factor group $N/\Phi(N)$.
	Lemma~\ref{lem:spext} shows that $V$ and $M$ are $\bbF_2G$-modules for $G$, and $M$ is $\bbF_2 G$-isomorphic to a quotient of $\Lambda_{\bbF_2}^2(V)$.
	
	First, we show that $G$ acts transitively on non-zero vectors of both $V$ and $M$.
	Recall that $\bfZ(N)=N'$ consists of all involutions of $N$ and the identity.
	Hence, non-zero vectors of $M$ are in the same orbits of $\Aut(N)$.
	Since $G^{\bfZ(N)}=\Aut(N)^{\bfZ(N)}$ by Lemma~\ref{lem:spext}(ii), it follows that $G$ is transitive on non-zero vectors of $M$.
	Note that every element in $N\setminus \bfZ(N)$ is of order $4$.
	Suppose that $x,y\in N\setminus \bfZ(N)$.
	Then $x^2$ and $y^2$ are involutions, and hence there exists $\alpha\in \Aut(N)$ such that $(x^2)^\alpha=y^2$.
	This yields that $(x^\alpha)^2=y^2$.
	We may decompose $N=x_1\bfZ(N)\cup x_2\bfZ(N)\cup\cdots\cup x_{|\bfZ(N)|}\bfZ(N)$ for some $x_i\in N\setminus \bfZ(N)$ as $|N/\bfZ(N)|=|\bfZ(N)|$.
	Then, for each representative $x_i$ and any element $a\in M$, we have $(ax_i)^2=a^2x_i^2=x_i^2$.
	Thus, we have
	\[\begin{aligned}
		(x^\alpha)^2=y^2&\Longleftrightarrow \mbox{$x^\alpha$ and $y$ both lie in the same coset $x_i\bfZ(N)$ for some $x_i$,}\\
		&\Longleftrightarrow x^\alpha \bfZ(N)=y\bfZ(N)\Longleftrightarrow (x\bfZ(N))^\alpha=y\bfZ(N).
	\end{aligned}\]
	Hence, any two non-identity elements of $V:=N/\bfZ(N)$ are in the same orbits of $\Aut(N)$.
	Thus, $G$ is transitive on non-zero vectors of $V$.

	Suppose that $\Aut(N)$ is non-solvable.
	Then $G$ is non-solvable by Lemma~\ref{lem:spext}(i).
	Since $\dim M=\dim V$, we have that $G^{(\infty)}\cong\SL_3(2^f)$ and $M$ is $\bbF_2G^{(\infty)}$-isomorphic to the dual $V$ by Theorem~\ref{thm:dual}.
	Let $v=x\bfZ(N)$ be a non-zero vector of $V$ where $x\in N\setminus\bfZ(N)$.
	Then $x$ is of order $4$, and $w:=x^2$ is a non-zero vector in $M$.
	Note that an element of $G^{(\infty)}$ stabilizing $v$ must stabilizes $w$ since $(ax)^2=x^2$ for any $a\in\bfZ(N)$, and hence $G^{(\infty)}_v\le G^{(\infty)}_w$, which contradicts the fact that $M$ is the dual $\bbF_2G^{(\infty)}$-module of $V$ for $G^{(\infty)}\cong\SL_3(2^f)$.
	Thus, $\Aut(N)$ is solvable, and $N$ is a Suzuki $2$-group by Shaw's Theorem~\cite{shaw1970Sylow}.
	The solvability of $\Aut(N)$ is proved by Bryukhanova in~\cite[Theorem 1.2]{bryukhanova1981Automorphism}.
\end{proof}

\section{Finite $2$-groups with three fusion classes}

In 2020, Bors and Glasby posed a paper~\cite{bors2020Finitea} which classified finite $2$-groups with exactly three fusion classes.
In this section, we introduce the families of $2$-groups with three fusion classes, and then obtain a similar classification as in~\cite{bors2020Finitea} by using Theorem~\ref{thm:gross} and the main Theorem in~\cite{dornhoff1970imprimitive}.

The first family is $A_2(n,\theta)$, a Suzuki $2$-group of type A, see~\cite[page 82]{higman1963Suzuki}.

\begin{definition}\label{def:apn}
Let $\theta\in\Gal(\bbF_{2^n}/\bbF_{2})$ such that $|\theta|\neq 1$ is odd.
For $a,b\in\bbF_{2^n}$, define
\[A_2(n,\theta)=\l M(a,b,\theta) \mid a,b\in\bbF_{2^n}\r,\]
where $M(a,b,\theta)$ is a matrix of the form
\[M(a,b,\theta)=\begin{pmatrix}1&a&b\\0&1&a^\theta\\0&0&1\end{pmatrix}.\]
\end{definition}

We note that, if $n=2k+1$ and $x^\theta=x^{k+1}$, then $A_2(2k+1,\theta)$ is isomorphic to a Sylow $2$-subgroup of the Suzuki simple group $\Sz(2^{2k+1})$, see~\cite[Proposition 13.4(vi)]{carter1972Simple}.
For any odd integers $n\neq 1$, the groups $A_2(n,\theta)$ are AT-groups by Zhang in~\cite[Lemma~2.2]{zhang1992finite}.

The group $N=A_2(n,\theta)$ is a special $2$-group with $\bfZ(N)=\l M(0,b,\theta)\mid b\in \bbF_{2^n}\r\cong\bbZ_2^{n}$.
The matrix $D=\mathrm{diag}(\lambda^{-1},1,\lambda^\theta)$ induces an automorphism $\xi$ of $N$, that is
\[\xi:M(a,b,\theta)\mapsto D^{-1}M(a,b,\theta)D=M(a\lambda,b\lambda\lambda^\theta,\theta),\]
and $|\xi|=2^n-1$.
The Frobenius automorphism $\phi$ of $\bbF_2^{2^n}$ induces an automorphism of $A_2(n,\theta)$ of order $n$,
and $\l\xi,\phi\r=\GammaL_1(2^n)$.

\begin{lemma}\label{lem:antheta}
	The group $N=A_2(n,\theta)$ is an AT-group, and $\Aut(N)\cong 2^{n^2}{:}\GammaL_1(2^{n})$.
\end{lemma}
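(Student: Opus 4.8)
The plan is to realise $\Aut(N)$ as an extension of the group $G$ it induces on $V:=N/\bfZ(N)$ by the kernel $K$ of that action, and to determine both pieces. First I would coordinatise $N$, writing $(a,b)$ for $M(a,b,\theta)$; then $(a,b)(a',b')=(a+a',b+b'+aa'^\theta)$, the centre is $M:=\bfZ(N)=\{(0,b):b\in\bbF_{2^n}\}$, squaring is $(a,b)^2=(0,a^{1+\theta})$, and $[(a,b),(a',b')]=(0,aa'^\theta+a'a^\theta)$. Thus $V$ and $M$ are each a copy of $\bbF_{2^n}$; squaring induces the map $\bar s\colon a\mapsto a^{1+\theta}$ and commutation the alternating form $\beta(a,a')=aa'^\theta+a'a^\theta=\bar s(a+a')+\bar s(a)+\bar s(a')$. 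By Lemma~\ref{lem:spext}(i), $K$ is elementary abelian of order $|\bfZ(N)|^n=2^{n^2}$; writing out its elements shows each has the form $(a,b)\mapsto(a,b+\psi(a))$ with $\psi\in\Hom_{\bbF_2}(V,M)$ arbitrary, so $K\cong\Hom_{\bbF_2}(V,M)$. It then remains to prove $G:=\Aut(N)/K\cong\GammaL_1(2^n)$ inside $\GL(V)$.

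As recorded before the lemma, $\langle\xi,\phi\rangle=\GammaL_1(2^n)$ acts on $N$; on $V$ it acts as the standard $\GammaL_1(2^n)\leqs\GL(V)$, and since $K$ acts trivially on $V$ while this action is faithful, $\langle\xi,\phi\rangle\cap K=1$ and $\GammaL_1(2^n)\leqs G$. For the reverse inclusion I would use that every $\alpha\in\Aut(N)$ preserves squares: if $g\in\GL(V)$ is the map induced on $V$ and $\rho\in\GL(M)$ the one on the centre, then $\rho$ is determined by $g$ (commutators span $M$) and must satisfy $g(a)^{1+\theta}=\rho(a^{1+\theta})$ for all $a$. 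Conversely any such pair $(g,\rho)$ does extend: the second coordinate of $\alpha$ is controlled by the bi-additive map $C(a,a')=g(a)g(a')^\theta+\rho(aa'^\theta)$, which is symmetric and vanishes on the diagonal by the two displayed relations, hence is an alternating form and so a coboundary, yielding the missing coordinate uniquely modulo $K$. So $G$ is exactly the set of $g\in\GL(V)$ for which $a\mapsto g(a)^{1+\theta}$ is a linear function of $a^{1+\theta}$, and the problem reduces to a statement about linearised polynomials.

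The crux, and the step I expect to be the main obstacle, is showing this last condition forces $g$ to be a monomial. Writing $g(a)=\sum_i c_ia^{2^i}$ and $\theta\colon x\mapsto x^{2^k}$, one has $g(a)^{1+\theta}=\sum_{i,j}c_ic_j^{2^k}a^{2^i+2^{j+k}}$, whereas $\rho(a^{1+\theta})$ involves only the exponents $2^m+2^{m+k}$, that is, pairs of binary positions at cyclic distance $k$. Matching coefficients, the carry terms $i\equiv j+k$ produce single powers of $2$, which are absent on the right because $1+2^k$ is not a power of $2$; this forces $c_ic_{i-k}=0$ for all $i$, so the support $S=\{i:c_i\neq0\}$ contains no two positions at distance $k$. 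If $S$ had two distinct elements $p,p'$, then the term $(i,j)=(p,p')$ contributes $c_pc_{p'}^{2^k}\neq0$ to the position-pair $\{p,p'+k\}$, whose only other possible contribution vanishes by the carry relation; hence $\{p,p'+k\}$ must itself have distance $\pm k$, giving $p-p'\equiv2k\pmod n$, and the same argument with $p,p'$ interchanged gives $4k\equiv0\pmod n$. This is impossible, since $|\theta|$ odd means $n/\gcd(n,k)$ is odd and cannot divide $4$ unless it equals $1$. Therefore $|S|=1$, $g$ is a monomial, $G=\GammaL_1(2^n)$, and as $\langle\xi,\phi\rangle$ is a complement to $K$ we conclude $\Aut(N)\cong2^{n^2}{:}\GammaL_1(2^n)$.

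Finally, for the AT-property I would note that $N$ has elements only of orders $1,2,4$, the involutions being the non-identity elements of $M$ and the order-$4$ elements being exactly $N\setminus\bfZ(N)$. On $M$ the automorphism $\xi$ acts as multiplication by $\lambda^{1+\theta}$; since $|\theta|$ odd gives $\gcd(1+2^k,2^n-1)=1$, this is again a primitive element, so $\langle\xi\rangle$ is already transitive on the involutions. For two elements $(a,b),(a',b')$ of order $4$ (so $a,a'\neq0$) a suitable power of $\xi$ equalises the first coordinates, after which an element $\psi\in K=\Hom_{\bbF_2}(V,M)$ with prescribed value at $a'$ (possible as $a'\neq0$) equalises the second. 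Hence $\Aut(N)$ is transitive on the elements of each order, so $N$ is an AT-group.
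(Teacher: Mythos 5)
Your proposal is correct, and it reaches the lemma by a genuinely different route from the paper. The paper's own proof establishes the lower bound $2^{n^2}{:}\GammaL_1(2^n)\cong\langle K,\xi,\phi\rangle\leqs\Aut(N)$ exactly as you do, but for the upper bound it quotes heavy machinery: $\Aut(N)$ is solvable by Theorem~\ref{thm:gross}(iii), so the group induced on $V=N/\bfZ(N)$ is a solvable linear group transitive on $V\setminus\{0\}$, hence embeds in $\GammaL_1(2^n)$ by the classification of transitive linear groups (Theorem~\ref{thm:2trans}); the AT property is not reproved at all but taken from Zhang's paper. You instead compute the upper bound by hand: an automorphism induces a pair $(g,\rho)$ intertwining the squaring map $a\mapsto a^{1+\theta}$; admissible pairs lift back to automorphisms because your bi-additive form $C$ is symmetric and alternating (both facts follow from the squaring relation via the polarization identity $\beta(a,a')=\bar{s}(a+a')+\bar{s}(a)+\bar{s}(a')$), hence is the coboundary of a quadratic map; and the linearised-polynomial matching — diagonal (carry) terms force $c_ic_{i-k}=0$, the pair $\{p,p'+k\}$ then forces $p-p'\equiv 2k\pmod n$, and by symmetry $4k\equiv 0\pmod n$, impossible since $n/\gcd(n,k)=|\theta|$ is odd and $\neq 1$ — forces $g$ to be a monomial $a\mapsto ca^{2^i}$, i.e.\ semilinear. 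This computation is sound, including the tacit point that after reducing exponents via $a^{2^n}=a$ the two kinds of exponents never collide: off-diagonal exponents $2^u+2^v$ with $u\neq v$ stay below $2^n-1$, while diagonal ones reduce to single powers of $2$ (note $n\geqs 3$ here). What your route buys: it is elementary and self-contained, independent of Theorem~\ref{thm:gross}, of the classification of transitive linear groups, and of Zhang's lemma, and it shows exactly where the hypothesis that $|\theta|$ is odd enters (in $\gcd(1+2^k,2^n-1)=1$ for the transitivity on involutions, and in $4k\not\equiv 0\pmod n$ for the monomial claim). What the paper's route buys is brevity: a few lines of citation replace your polynomial analysis, at the cost of resting the lemma on the much deeper solvability theorem.
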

\begin{proof}
	Obviously, $M(a,b,\theta)$ and $M(a\lambda,b\lambda\lambda^\theta,\theta)$ do not belong to the same coset of $\bfZ(N)$.
	It follows that $\langle\xi\rangle$ acts on $N/\bfZ(N)$ faithfully, and hence $\langle \xi,\phi\rangle\cong\GammaL_1(2^n)$ acts on $N/\bfZ(N)$ faithfully.
	Let $K$ be the kernel of $\Aut(N)$ acting on $N/\bfZ(N)$.
	Lemma~\ref{lem:spext}(i) deduces that $K\cong \bbZ_{2}^{n^2}$, and thus
	\[\mbox{$\langle K,\xi,\phi\rangle\cong 2^{n^2}{:}\GammaL_1(2^n)\leqs\Aut(N)$.}\]
	Noting that $\Aut(N)$ is solvable by Theorem~\ref{thm:gross}(iii), we have that
	$\Aut(N)^{N/\bfZ(N)}$ is isomorphic to a solvable subgroup of $\GL_n(2)$ which is transitive on the non-zero vectors of $\bbF_2^n$.
	By Theorem~\ref{thm:2trans}, we obtain that $N/K=\Aut(N)^{N/\bfZ(N)}\lesssim \GammaL_1(2^n)$, and so
	$\Aut(N)\cong 2^{n^2}{:}\GammaL_1(2^{n})$.
\end{proof}

Sylow $2$-subgroups of $\SU_3(2^n)$ are Suzuki $2$-group of type B (see~\cite[page 314]{huppert1982Finite}).
Equip the space $\bbF_{q^2}^3$ ($q=2^n$) with the unitary form:
\[\bigl((x_1,x_2,x_3),(y_1,y_2,y_3)\bigr)=x_1y_3^q+x_2y_2^q+x_3y_1^q.\]
Then the following group $B_2(n)$ is a Sylow $2$-subgroup of $\SU_3(q)$, see~\cite[page 249]{dixon1996Permutation}.

\begin{definition}\label{def:bpn}
Let $q=2^n$, and $B_2(n)=\l M(a,b)\mid a,b\in\bbF_{q^2}, b+b^q+aa^q=0\r$, where
	\[M(a,b)= \begin{pmatrix}1&a&b\\0&1&a^q\\0&0&1\end{pmatrix}.\]
\end{definition}

It was noticed in Zhang~\cite[Lemma 2.3]{zhang1992finite} that $B_2(n)$ is an AT-group, with $\bfZ(N)=\l M(0,b)\mid b\in \bbF_{q^2}, b+b^q=0\r\cong\bbZ_2^{n}$.

\begin{lemma}\label{lem:bpn}
	Let $N=B_2(n)$.
	Then $N$ is an AT-group, and $\Aut(N)\cong 2^{2n^2}{:}\GammaL_1(2^{2n})$.
\end{lemma}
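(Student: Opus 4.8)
The plan is to follow the template of Lemma~\ref{lem:antheta}: exhibit explicit automorphisms generating a copy of $2^{2n^2}{:}\GammaL_1(2^{2n})$ inside $\Aut(N)$, and then invoke solvability together with Theorem~\ref{thm:2trans} to see that nothing more occurs. Writing $q=2^n$, I would first record the multiplication rule $M(a,b)M(a',b')=M(a+a',\,b+b'+aa'^q)$, from which $N$ has class $2$ with $[M(a,b),M(a',b')]=M(0,\,a^qa'+aa'^q)$ and $M(a,b)^2=M(0,\,a^{q+1})$. Since $a^{q+1}\neq 0$ whenever $a\neq 0$, every element of $N\setminus\bfZ(N)$ has order $4$ while $\bfZ(N)\setminus\{1\}$ is exactly the set of involutions. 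Thus $V:=N/\bfZ(N)\cong\bbF_{q^2}$ has $\bbF_2$-dimension $2n$ and $\bfZ(N)\cong\bbF_q$ has dimension $n$, the commutator inducing the form $(a,a')\mapsto a^qa'+aa'^q$ and squaring inducing the norm $a\mapsto a^{q+1}$.

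I would then introduce two families of automorphisms. For $\lambda\in\bbF_{q^2}^{\times}$ set $\xi_\lambda\colon M(a,b)\mapsto M(\lambda a,\,\lambda^{q+1}b)$; since $\lambda^{q+1}\in\bbF_q$ this respects the defining trace condition, and because $(\lambda a)(\lambda a')^q=\lambda^{q+1}aa'^q$ it is an automorphism of $N$, with $\lambda\mapsto\xi_\lambda$ an injective homomorphism $\bbF_{q^2}^{\times}\hookrightarrow\Aut(N)$. The entrywise Frobenius $\phi\colon M(a,b)\mapsto M(a^2,b^2)$ is an automorphism of order $2n$ with $\phi\xi_\lambda\phi^{-1}=\xi_{\lambda^2}$, so $H:=\langle\xi_\lambda,\phi\rangle\cong\GammaL_1(2^{2n})$; as $\xi_\lambda$ acts on $V$ as multiplication by $\lambda$ and $\phi$ as the field Frobenius, $H$ is faithful on $V$. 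Letting $K$ be the kernel of $\Aut(N)$ acting on $V$, Lemma~\ref{lem:spext}(i) gives $K\cong\bbZ_2^{2n^2}$, and $H\cap K=1$, whence $\langle K,H\rangle\cong 2^{2n^2}{:}\GammaL_1(2^{2n})\leqs\Aut(N)$.

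Next come the AT-property and the reverse inequality. Because $\lambda\mapsto\lambda^{q+1}$ maps $\bbF_{q^2}^{\times}$ onto $\bbF_q^{\times}$, the $\xi_\lambda$ are already transitive on the involutions $M(0,b)$; and conjugation by $M(c,d)$ sends $M(a,b)$ to $M(a,\,b+a^qc+ac^q)$, whose second coordinate sweeps out the whole coset cut out by the trace condition as $c$ varies, so the inner automorphisms together with the scalings $\xi_\lambda$ fuse all elements of order $4$ into one orbit; hence $N$ is an AT-group. In particular $\Aut(N)$ is transitive on the involutions, so $N$ is a Suzuki $2$-group and Theorem~\ref{thm:gross}(iii) forces $\Aut(N)$ to be solvable. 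Then $G:=\Aut(N)/K$ is a solvable subgroup of $\GL_{2n}(2)$ that is transitive on $V\setminus\{0\}$ (via the $\xi_\lambda$), so $G\lesssim\GammaL_1(2^{2n})$ by Theorem~\ref{thm:2trans}(i). Comparing with the lower bound $H\cong\GammaL_1(2^{2n})\leqs G$ gives $G\cong\GammaL_1(2^{2n})$, and therefore $\Aut(N)\cong 2^{2n^2}{:}\GammaL_1(2^{2n})$.

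The one point needing genuine care is the second paragraph: I must verify that scaling the $a$-coordinate by $\lambda$ compels the twist $\lambda^{q+1}$ --- and not $\lambda$ or $\lambda^2$ --- on $\bfZ(N)$, so that $\xi_\lambda$ is simultaneously compatible with the commutator form and the squaring map and actually preserves $N$. This is precisely where the unitary norm enters, and it is the feature distinguishing $B_2(n)$ from $A_2(n,\theta)$; once it is in place the remainder is a routine reprise of Lemma~\ref{lem:antheta}, with $\bbF_{2^n}$ replaced by $\bbF_{2^{2n}}$ and the norm $\lambda\mapsto\lambda^{q+1}$ governing the induced action on $\bfZ(N)$.
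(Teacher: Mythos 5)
Your proposal is correct and follows essentially the same route as the paper: your scaling automorphisms $\xi_\lambda$ are exactly the paper's automorphism induced by conjugation with $D=\mathrm{diag}(\lambda^{-q},\lambda^{1-q},\lambda)$, and the remainder — faithfulness of $\langle\xi_\lambda,\phi\rangle\cong\GammaL_1(2^{2n})$ on $N/\bfZ(N)$, the kernel $K\cong\bbZ_2^{2n^2}$ from Lemma~\ref{lem:spext}(i), and the upper bound via Theorem~\ref{thm:gross}(iii) (solvability) combined with Theorem~\ref{thm:2trans} — matches the paper's argument step for step. The only divergence is that you verify the AT-property directly (norm surjectivity for the involutions, inner automorphisms sweeping each coset of $\bfZ(N)$ for the order-$4$ elements) where the paper cites Zhang's Lemma~2.3, which makes your write-up self-contained and, incidentally, your kernel order $2^{2n^2}$ corrects the $\bbZ_2^{n^2}$ misprint in the paper's own proof.
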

\begin{proof}
	Let $D=\mathrm{diag}(\lambda^{-q},\lambda^{1-q},\lambda)$, where $\l\lambda\r=\bbF_{q^2}^\times$.
	Then $D^{-1}M(a,b)D=M(a\lambda,b\lambda^{q+1})$, and
	$b\lambda^{q+1}+(b\lambda^{q+1})^q+(a\lambda)(a\lambda)^q=\lambda^{1+q}(b+b^q+a^{1+q})=0$.
	Thus, $D$ induces an automorphism of $N$, denoted by $\xi$.
	Let $\phi$ be the Frobenius automorphism $x\mapsto x^2$ on $\bbF_{q^2}$, which induces an automorphism of $N$.
	As $M(a,b)$ and $M(a\lambda,b\lambda^{q+1})$ are not in the same coset of $\bfZ(N)$, it follows that $\langle\xi,\phi\rangle\cong\GammaL_1(2^{2n})$ acts faithfully on $N/\bfZ(N)$.

	Arguing as in the proof of Lemma~\ref{lem:antheta}, the kernel $K$ of $\Aut(N)$ on $N/\bfZ(N)$ is isomorphic to $\bbZ_2^{n^2}$, and $\Aut(N)/K=\l \xi,\phi\r$.
	Therefore, we conclude that $\Aut(N)\cong 2^{2n^2}{:}\GammaL_1(2^{2n})$, and $N$ is an AT-group.
\end{proof}

The following Suzuki $2$-group was first given in case~(viii) of the main Theorem in~\cite{dornhoff1970imprimitive}.

\begin{definition}\label{def:pepsilon}
For a generator $\epsilon$ of $\bbF_{2^6}^\times$, define $P(\epsilon)$ to be the group consisting of elements in $\bbF_{2^6}\times\bbF_{2^3}$ with the opteration of multiplication defined by
\[(a,x)(b,w)=(a+b,x+y+ab^2\epsilon+a^8b^{16}\epsilon^8),\mbox{ for $(a,x),(b,w)\in \bbF_{2^6}\times\bbF_{2^3}$.}\]
\end{definition}

\vskip0.1in
The next lemma shows that the definition of $P(\epsilon)$ is independent of the choice of $\epsilon$.

\begin{lemma}\label{lem:uniquepe}
For any two generators $\epsilon$ and $\lambda$ of $\bbF_{2^6}^\times$, we have $P(\epsilon)\cong P(\lambda)$.
\end{lemma}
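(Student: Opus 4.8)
The plan is to exhibit explicit isomorphisms built from two elementary moves, and then to check that these moves connect all generators of $\bbF_{2^6}^\times$. First I would rewrite the defining cocycle: since $a^8b^{16}\epsilon^8=(ab^2\epsilon)^8=(ab^2\epsilon)^{2^3}$, the group law reads
\[(a,x)(b,y)=\bigl(a+b,\ x+y+\tau(ab^2\epsilon)\bigr),\qquad \tau(z):=z+z^{2^3}=\Tr_{\bbF_{2^6}/\bbF_{2^3}}(z).\]
I would record three facts about the relative trace $\tau$: it is $\bbF_{2^3}$-linear, it maps $\bbF_{2^6}$ onto $\bbF_{2^3}$ (so the cocycle indeed takes values in the second factor, since $\tau(z)^{2^3}=\tau(z)$), and the associated form $(z,w)\mapsto\tau(zw)$ is nondegenerate, so that $\tau(zw)=0$ for all $w$ forces $z=0$.

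The first move is the Frobenius map $\Phi\colon(a,x)\mapsto(a^2,x^2)$, which preserves $\bbF_{2^3}$ in the second coordinate. Using $\tau(z)^2=z^2+z^{16}=\tau(z^2)$ one checks that $\Phi$ carries the product of $P(\epsilon)$ to the product of $P(\epsilon^2)$, whence $P(\epsilon)\cong P(\epsilon^2)$ (and $\epsilon^2$ is again a generator). The second move is scaling: if $\epsilon\lambda^{-1}=\delta^3$ is a cube in $\bbF_{2^6}^\times$, then $(a,x)\mapsto(\delta a,x)$ is a homomorphism $P(\epsilon)\to P(\lambda)$, because the image cocycle is $\tau(\delta a(\delta b)^2\lambda)=\tau(\delta^3\lambda\cdot ab^2)$ and $\delta^3\lambda=\epsilon$; nondegeneracy of the trace form shows conversely that such a scaling is an isomorphism exactly when $\epsilon\lambda^{-1}$ is a cube. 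Thus scaling links $P(\epsilon)$ and $P(\lambda)$ precisely when $\epsilon$ and $\lambda$ lie in the same coset of the subgroup of cubes.

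Finally I would combine the two moves by a counting argument. Since $\bbF_{2^6}^\times$ is cyclic of order $63=3^2\cdot 7$, the cubes form the unique subgroup of index $3$; fixing a generator $g$ and writing $\epsilon=g^i$, the cube coset of $\epsilon$ is read off from $i \bmod 3$. A generator has $\gcd(i,63)=1$, hence $i\not\equiv 0\pmod 3$, so every generator lies in one of the two cosets $i\equiv 1$ or $i\equiv 2\pmod 3$; scaling already connects all generators inside each of these cosets. The Frobenius move sends $g^i\mapsto g^{2i}$, that is $i\mapsto 2i\equiv -i\pmod 3$, which interchanges the cosets $i\equiv 1$ and $i\equiv 2$. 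Chaining the two types of isomorphisms therefore links any two generators, giving $P(\epsilon)\cong P(\lambda)$. The point requiring care — and the main obstacle — is that scaling alone reaches only a single cube coset; it is essential to invoke the Frobenius automorphism to bridge the two residues modulo $3$, and the coset bookkeeping is exactly what certifies that every generator is reached.
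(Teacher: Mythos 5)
Your proof is correct and follows essentially the same two-move strategy as the paper: scaling by a cube root of $\epsilon\lambda^{-1}$ handles generators in the same coset of the cubes, the Frobenius automorphism swaps the two cosets, and the mod-$3$ bookkeeping in the cyclic group of order $63$ ties everything together. The only difference is one of execution — you write down explicit isomorphisms $(a,x)\mapsto(\delta a,x)$ and $(a,x)\mapsto(a^2,x^2)$ on group elements, whereas the paper verifies the same two moves by comparing the structure constants $f_\epsilon$, $g_\epsilon$ of presentations (its Frobenius step is phrased via equality of minimal polynomials), so your version is, if anything, more direct and self-contained.
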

\begin{proof}
	Let $\epsilon$ be a generator of $\bbF_{2^6}^\times$.
	Let $\Tr: x\mapsto x+x^8$ be the trace map on $\bbF_{2^6}$ over $\bbF_{2^3}$.
	For convenience, let $f_{\epsilon}(a)=\Tr(a^3\epsilon)$ and let $g_{\epsilon}(a,b)=\Tr\bigl((a+b)ab\epsilon\bigr)$.
	Calculation shows
	\[(a,x)^2=(0,f_\epsilon(a)),\mbox{ and }[(a,x),(b,y)]=\bigl(0,g_\epsilon(a,b)\bigr)\mbox{, in $P(\epsilon)$}.\]

	Let $X=(x_1,...,x_6)$ be a basis of $\bbF_{2^6}^+$, and $Z=(z_1,z_2,z_3)$ be a basis of $\bbF_{2^3}^+$.
	Then the generating relations of the group $P(\epsilon)$ can be obtained by the coefficients of $f_\epsilon(x_i)$ and $g_\epsilon(x_i,x_j)$ with respect to the basis $Z$ of $\bbF_{2^3}^+$ for each $i,j=1,...,6$.

	First, we show that $P(\epsilon)\cong P(\epsilon^{3k+1})$ for any generator $\epsilon$ and any integer $k$.
	Fix $Z=(1,\epsilon^9,\epsilon^{18})$.
	Set $x_i=\epsilon^{i+k-1}$ and $y_i=\epsilon^{i-1}$ for $i=1,...,6$.
	Then both $X=(x_1,...,x_6)$ and $Y=(y_1,...,y_6)$ are basis of $\bbF_{2^6}^+$.
	Note that
	\[\begin{aligned}
		f_{\epsilon^{3k+1}}(a)&=\Tr(a^3\epsilon^{3k+1})=\Tr((a\epsilon^k)^3\epsilon)=f_{\epsilon}(a\epsilon^k),\\
		g_{\epsilon^{3k+1}}(a,b)&=\Tr\bigl((a+b)ab\epsilon^{3k+1}\bigr)=\Tr\bigl(((a\epsilon^k)+(b\epsilon^k))(a\epsilon^k)(b\epsilon^k)\bigr)=g_{\epsilon}(a\epsilon^k,b\epsilon^k).
	\end{aligned}\]
	Thus, under the basis $Z$, the coefficients of $f_{\epsilon}(x_i)$ and $f_{\epsilon^{3k+1}}(y_i)$ are equal, and the coefficients of $g_{\epsilon}(x_i,x_j)$ and $g_{\epsilon^{3k+1}}(y_i,y_j)$ are equal for $1\leqs i,j\leqs 6$.
	Hence $P(\epsilon)\cong P(\epsilon^{3k+1})$ for each integer $k$.

	We thus conclude that, for a given generator $\epsilon$, if $s\equiv t\pmod 3$ then $P(\epsilon^s)\cong P(\epsilon^t)$.
	Note that $\epsilon^{3k}$ is not an generator of $\bbF_{2^6}^\times$ for any integer $k$.
	Hence, for any generator $\lambda$ of $\bbF_{2^6}^\times$, $P(\lambda)$ is isomorphic to one of $P(\epsilon)$ and $P(\epsilon^2)$.

	Choose $X=(1,\epsilon,...,\epsilon^5)$ and $Z=(1,\epsilon^9,\epsilon^{18})$.
	It is not hard to see that the coefficients of $f_\epsilon(x_i)$ and $g_\epsilon(x_i,x_j)$ with respect basis $Z$ only depend on the minimal polynomial of $\epsilon$ over $\bbF_{2}$.
	Notice that the minimal polynomials of $\epsilon$ and $\epsilon^2$ are the same since $x\mapsto x^2$ is the Frobenius automorphism of $\bbF_{2^6}$.
	Thus we have $P(\epsilon)\cong P(\epsilon^2)$.

	Therefore, $P(\lambda)$ is isomorphic to $P(\epsilon)$ for any generators $\lambda$ and $\epsilon$ of $\bbF_{2^6}^\times$.
\end{proof}

The automorphism group of $P(\epsilon)$ can be computed by \Magma, led to the next lemma.

\begin{lemma}\label{lem:epsilon}
	Let $N=P(\epsilon)$ for a generator $\epsilon$ of $\bbF_{2^6}^\times$.
	Then $N$ is an AT-group with $\Aut(N)\cong 2^{18}{:}(7{:}9)$.
\end{lemma}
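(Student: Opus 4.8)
The plan is to identify $\Aut(N)$ for $N=P(\epsilon)$ by reducing it to an explicit similitude group and then recognising that group as $7{:}9$. I would begin from the structure already visible in the proof of Lemma~\ref{lem:uniquepe}: since $(a,x)^2=(0,f_\epsilon(a))$ and $[(a,x),(b,y)]=(0,g_\epsilon(a,b))$ with $f_\epsilon(a)=\Tr(a^3\epsilon)$ and $g_\epsilon(a,b)=\Tr\!\bigl(ab(a+b)\epsilon\bigr)$, the subgroup $\bfZ(N)=N'=\Phi(N)=\{(0,x)\}$ is elementary abelian of rank $3$ and $V:=N/\bfZ(N)\cong\bbF_{2^6}$, so $N$ is special of order $2^9$ with $|N|=|\bfZ(N)|^3$. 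A one-line arithmetic check shows every involution is central: $f_\epsilon(a)=0$ forces $a^3\in\epsilon^{-1}\bbF_{2^3}^\times=\{\epsilon^{9k-1}\}$, whereas the cubes are $\{\epsilon^{3j}\}$, and $3j\equiv 9k-1\pmod{63}$ is insoluble modulo $3$; hence $a=0$, and every element of $N\setminus\bfZ(N)$ has order $4$.

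Next I would set $G=\Aut(N)^{V}\le\GL_6(2)$, so that its kernel $K$ on $V$ has order $|\bfZ(N)|^{6}=2^{18}$ by Lemma~\ref{lem:spext}(i), giving $K\cong\bbZ_2^{18}$ and $\Aut(N)=2^{18}{:}G$. Being a Suzuki $2$-group \cite{dornhoff1970imprimitive}, $N$ has $\Aut(N)$ transitive on its involutions, so $\Aut(N)$ is solvable by Theorem~\ref{thm:gross}(iii); thus $G$ is solvable and (as shown below) transitive on $V\setminus\{0\}$, whence Theorem~\ref{thm:2trans}(i) forces $G\le\GammaL_1(2^6)$. Every element of $G$ is then a semilinear map $\rho\colon a\mapsto\lambda a^{2^k}$, and it underlies an automorphism exactly when it is compatible with the squaring map, i.e. $\kappa\circ f_\epsilon=f_\epsilon\circ\rho$ for some $\kappa\in\GL(M)$ (unique, since the $f_\epsilon$-values span $M=N'$). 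Writing $\lambda=\epsilon^{\ell}$ and comparing the kernels of the codimension-$3$ maps $u\mapsto\Tr(u\epsilon)$ and $u\mapsto\Tr(\lambda^3u^{2^k}\epsilon)$---equal because the order-$21$ group of cubes spans $\bbF_{2^6}$ over $\bbF_2$---this compatibility collapses to the single congruence $3\ell\equiv 2^k-1\pmod 9$, which has $21$ solutions for each $k\in\{0,2,4\}$ and none for odd $k$. Hence $|G|=63$.

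To pin down the isomorphism type I would exhibit generators. The scalar $\sigma\colon a\mapsto\epsilon^9 a$ lies in $G$ and has order $7$. The twisted Frobenius $\tau\colon a\mapsto\epsilon a^4$ also lies in $G$, and a direct computation gives $\tau^3\colon a\mapsto\epsilon^{21}a$, a scalar of order $3$, so $|\tau|=9$; moreover $\tau\sigma\tau^{-1}=\sigma^4$. Therefore $G=\langle\sigma\rangle\rtimes\langle\tau\rangle\cong 7{:}9$ and $\Aut(N)\cong 2^{18}{:}(7{:}9)$. For the AT-property I would note that the stabiliser in $G$ of $1\in\bbF_{2^6}$ is trivial (the congruence forces $\ell=0$ and then $k=0$), so $G$ is sharply transitive on $V\setminus\{0\}$; while the scalars $a\mapsto\epsilon^{3t}a$ act on $M$ as multiplication by $\epsilon^{9t}$, transitively on $M\setminus\{0\}$. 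By the orbit count of Lemma~\ref{lem:spext}(iii) these two transitivities give exactly three $\Aut(N)$-orbits on $N$---the identity, the involutions, and the elements of order $4$---so $N$ is an AT-group.

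The main obstacle is the similitude computation in the second paragraph: one must verify carefully that semilinearity together with preservation of $f_\epsilon$ is equivalent to the clean congruence $3\ell\equiv 2^k-1\pmod 9$, and in particular that multiplying the Frobenius power $\phi^2$ (of order $3$) by the scalar $\epsilon$ is precisely the twist producing an element $\tau$ of order $9$ rather than $3$---this is what distinguishes $7{:}9$ from the other non-abelian group $(7{:}3)\times\bbZ_3$ of order $63$. All of these identities can equally be confirmed by a direct \Magma\ computation of $\Aut(N)$, as the statement indicates.
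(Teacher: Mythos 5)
Your proposal is correct in substance but takes a genuinely different route from the paper: the paper's entire ``proof'' of this lemma is an appeal to a \Magma\ computation (the explicit presentation of $P(\epsilon)$ printed after the lemma exists precisely to make that computation practical), whereas you give a structural hand proof modelled on the paper's own treatment of Lemmas~\ref{lem:antheta} and~\ref{lem:bpn}, extended to the harder case $|N|=|\bfZ(N)|^3$. Your computations check out: the only involutions of $P(\epsilon)$ are central (the insolubility of $3j\equiv 9k-1\pmod{63}$), the lifting condition does collapse to $3\ell\equiv 2^k-1\pmod 9$ (equality of the two kernels amounts to $\lambda^3\epsilon^{1-2^k}\in\bbF_{2^3}^\times=\langle\epsilon^9\rangle$), and $\sigma\colon a\mapsto\epsilon^9a$, $\tau\colon a\mapsto\epsilon a^4$ satisfy $\tau^3=\epsilon^{21}\cdot\mathrm{id}$ and $\tau\sigma\tau^{-1}=\sigma^4$, so they generate $7{:}9$ rather than $(7{:}3)\times\bbZ_3$; Lemma~\ref{lem:spext}(iii) then correctly converts the two transitivity statements into the three-orbit (AT) conclusion, and Schur--Zassenhaus gives the splitting $2^{18}{:}(7{:}9)$ once $|G|=63$ is known. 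What your approach buys is a human-checkable argument and an independent confirmation of the \Magma\ output.

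Three steps, however, need repair. (1) As written the argument is circular: you invoke Theorem~\ref{thm:2trans}(i) to get $G\le\GammaL_1(2^6)$, which requires transitivity of $G$ on $V\setminus\{0\}$, but your transitivity proof (``the congruence forces $\ell=0$ and then $k=0$'') lives inside that $\GammaL_1$-description. Repair: either quote Dornhoff's theorem in full strength (case (viii) of \cite{dornhoff1970imprimitive} supplies a solvable automorphism group with exactly three orbits, so $\Aut(N)$ is already transitive on $N\setminus\bfZ(N)$, hence $G$ on $V\setminus\{0\}$), or first verify $\sigma,\tau\in G$ and check directly that $\langle\sigma,\tau\rangle$, of order $63$, has trivial stabilizer of $1$. (2) Your ``underlies an automorphism exactly when it is compatible with the squaring map'' uses, in the sufficiency direction, the nontrivial fact that central extensions of elementary abelian $2$-groups are classified by their quadratic (squaring) maps; this is true but unproved and uncited. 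It is also avoidable: necessity alone gives $|G|\le 63$, and the only two lifts you need are explicit, since the defining cocycle equals $\Tr(ab^2\epsilon)$ and one checks directly that $(a,x)\mapsto(\epsilon^9a,\epsilon^{27}x)$ and $(a,x)\mapsto(\epsilon a^4,x^4)$ are automorphisms of $N$. (3) Theorem~\ref{thm:2trans}(i) only places $G$ in \emph{some} $\GL_6(2)$-conjugate of $\GammaL_1(2^6)$, not a priori in the copy determined by the $\bbF_{2^6}$-structure used to define $f_\epsilon$, and your congruence analysis needs the latter. This too is repairable: $G$ contains the multiplications by all cubes, a cyclic group $C$ of order $21$ acting irreducibly, whose centralizer in $\GL_6(2)$ is exactly the standard scalar group $\bbF_{2^6}^\times$; hence the cyclic normal subgroup of order $63$ of any copy of $\GammaL_1(2^6)$ containing $G$ is the standard scalar group, and that copy is the standard $\GammaL_1(2^6)$. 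With these repairs your proof is complete and stands as a genuine alternative to the paper's purely computational one.
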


We remark that some generators $\epsilon$ of $\bbF_{2^6}^\times$ satisfy minimal polynomial $\epsilon^6+\epsilon^4+\epsilon^3+\epsilon+1$, and with this polynomial we may write the generating relations for $P(\epsilon)$ explicitly, which makes computation with $P(\epsilon)$ in Magma efficiently:
\[\begin{aligned}
	P(\epsilon)=&\langle x_i,z_j\mbox{ for $1\leqs i\leqs 6$ and $1\leqs j\leqs 3\r$, where}
\end{aligned}\]
\[\begin{array}{ll}
	& z_j^2=[x_i,z_j]=[z_k,z_{\ell}]=1\mbox{ for each $i,j,k,\ell$,}\\
	&x_1^2=x_3^2=z_2,\ x_2^2=z_2z_3,\ x_4^2=z_3,\ x_5^2=z_1z_2z_3,\ x_6^2=z_3,\\
	&[x_1,x_2]=[x_3,x_5]=[x_3,x_6]=z_1z_2,\ [x_1,x_3]=z_1z_3,\ [x_1,x_4]=z_3,\\
	&[x_1,x_5]=[x_3,x_4]=[x_5,x_6]=z_2,\ [x_1,x_6]=1,\ [x_2,x_6]=z_1z_2z_3,\\
	&[x_2,x_3]=[x_2,x_4]=[x_4,x_6]=z_1,\ [x_2,x_5]=[x_4,x_5]=z_2z_3\rangle.
\end{array}\]

Dornhoff~\cite{dornhoff1970imprimitive} determined finite groups $N$ of which a solvable group of automorphisms has exactly three orbits on $N$.
In his classification, desired non-abelian $2$-groups are precisely $A_2(n,\theta)$, $B_p(n)$ and $P(\epsilon)$.=
Then the following lemma immediately holds since each Suzuki $2$-group has a solvable automorphism group by Theorem~\ref{thm:gross}(iii).
\begin{lemma}\label{lem:at2gps}
	If $N$ is a finite non-abelian $2$-group such that $\Aut(N)$ has exactly three orbits on $N$, then $N=A_2(n,\theta)$, $B_p(n)$ or $P(\epsilon)$.
\end{lemma}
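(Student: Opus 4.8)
The plan is to verify that the hypotheses of Dornhoff's theorem~\cite{dornhoff1970imprimitive} are met and then read off the conclusion from his classification. Dornhoff classifies the finite groups admitting a \emph{solvable} group of automorphisms with exactly three orbits, and among these the non-abelian $2$-groups are precisely $A_2(n,\theta)$, $B_p(n)$ and $P(\epsilon)$; so the whole task reduces to producing a solvable automorphism group of $N$ with three orbits. The full group $\Aut(N)$ will turn out to be such a group, so the real work is to show that $\Aut(N)$ is solvable.

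First I would analyse the orbit structure. The identity is fixed by every automorphism, so $\{1\}$ is one orbit, and the two remaining orbits each consist of elements of a single order, since automorphisms preserve element order. As $N$ is non-abelian it is not of exponent $2$, hence contains an element of order at least $4$; being a non-trivial $2$-group it also contains involutions. These cannot lie in a common orbit, and there is no room for two separate orbits of involutions, so one nontrivial orbit is exactly the set of all involutions and the other is the set of all elements of some single order $m\geqs 4$. If $x$ has order $m$ then $x^2$ has order $m/2$, which must equal $2$ or $m$; only $m=4$ is possible. Consequently $N$ has exponent $4$, every non-identity element has order $2$ or $4$, and, crucially, $\Aut(N)$ is transitive on the involutions of $N$.

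With transitivity on involutions established, $N$ is a $2$-automorphic $2$-group, so Theorem~\ref{thm:gross} applies and, since $N$ is non-abelian, $N$ is either a generalized quaternion group or a Suzuki $2$-group. I would then argue that $\Aut(N)$ is solvable in either case. For a Suzuki $2$-group this is precisely Theorem~\ref{thm:gross}(iii). For a generalized quaternion group, the exponent-$4$ conclusion forces $N=Q_8$ (as $Q_{2^n}$ has exponent $2^{n-1}$), and $\Aut(Q_8)\cong \Sym_4$ is solvable; one notes moreover that $Q_8\cong B_2(1)$ already lies in the family $B_p(n)$, so this degenerate case is consistent with the asserted list.

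Finally, with $\Aut(N)$ solvable and having exactly three orbits on $N$, Dornhoff's classification~\cite{dornhoff1970imprimitive} applies with the solvable group $A=\Aut(N)$, and its list of non-abelian $2$-groups yields $N=A_2(n,\theta)$, $B_p(n)$ or $P(\epsilon)$. The only genuine obstacle is the solvability hypothesis that Dornhoff requires: everything hinges on upgrading ``three orbits'' to ``three orbits under a \emph{solvable} automorphism group,'' and this is exactly what Theorem~\ref{thm:gross} supplies through the Suzuki/generalized-quaternion dichotomy together with the solvability statement in part~(iii). The remaining steps—confirming the exponent is $4$ and matching Dornhoff's list against the three named families defined in Definitions~\ref{def:apn}, \ref{def:bpn} and \ref{def:pepsilon}—are bookkeeping.
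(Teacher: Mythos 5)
Your proposal is correct and takes essentially the same route as the paper: upgrade ``$\Aut(N)$ has three orbits'' to ``a \emph{solvable} automorphism group has three orbits'' via Theorem~\ref{thm:gross}, then read off the non-abelian $2$-groups from Dornhoff's classification~\cite{dornhoff1970imprimitive}. In fact you are more careful than the paper's one-line argument, which appeals only to the solvability of automorphism groups of Suzuki $2$-groups and silently passes over the generalized quaternion possibility that you resolve explicitly (exponent $4$ forces $Q_8\cong B_2(1)$, with $\Aut(Q_8)\cong \Sym_4$ solvable).
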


The above lemma is the same as~\cite[Theorem 1.1]{bors2020Finitea} given by Bors and Glasby.
They shows that $B_p(n)$ and $P(\epsilon)$ are in the family of Suzuki $2$-group of type B.

Now we are ready to prove Corollary~\ref{cor:AT-2-gps}.
Recall that two elements are conjugate under the automorphism group are said to be fused.

\begin{proof}[Proof of Corollary~$\ref{cor:AT-2-gps}$]
	First, let $N=A_2(n,\theta)$, $B_p(n)$ or $P(\epsilon)$, as in part~(iv) of Corollary~\ref{cor:AT-2-gps}.
	By Lemmas~\ref{lem:antheta}(ii), \ref{lem:bpn}(ii) and~\ref{lem:epsilon}, the group $N$ is an AT-group, as in part~(iii) of Corollary~\ref{cor:AT-2-gps}.
	This yields (iv)$\Rightarrow$(iii)$\Rightarrow$(ii)$\Rightarrow$(i), as by definition an AT-group is an FIF-group, and cyclic subgroups of an FIF-group $N$ of the same order are fused.

	Now assume that cyclic subgroups of $N$ of the same order are fused.
	Then involutions of $N$ are fused, and $N$ is a Suzuki $2$-group.
	Obviously, non-identity elements of $\bfZ(N)$ are all involutions, and elements in $N\setminus \bfZ(N)$ are all of order $4$.
	By definition, for any elements $x,y\in N\setminus\bfZ(N)$, there exists $\alpha\in \Aut(N)$ such that $x^\alpha=y$ or $x^\alpha=y^{-1}$.
	Hence, $(x\bfZ(N))^\alpha=y\bfZ(N)$, and $\Aut(N)$ is transitive on non-identity elements of $N/\bfZ(N)$.
	By Lemma~\ref{lem:spext}(iii), $\Aut(N)$ has exactly three orbits on $N$, and thus
	\begin{enumerate}
		\item[(a)] elements of the same order are in the same orbit of $\Aut(N)$, and $N$ is an AT-group, and also an FIF-group;
		\item[(b)] $N=A_2(n,\theta)$, $B_p(n)$ or $P(\epsilon)$ by Lemma~\ref{lem:at2gps}.
	\end{enumerate}
	That is to say, part~(i) of Corollary~\ref{cor:AT-2-gps} implies parts~(ii)-(iv).
	This deduces that parts~(i)-(iii) of Corollary~\ref{cor:AT-2-gps} are equivalent as obviously (iii)$\Rightarrow$(ii)$\Rightarrow$(i).
	So all the four statements of Corollary~\ref{cor:AT-2-gps} are equivalent, completing the proof.
\end{proof}

\end{document}